\providecommand{\U}[1]{\protect\rule{.1in}{.1in}}
\newtheorem{theorem}{Theorem}
\newtheorem{corollary}[theorem]{Corollary}
\newtheorem{lemma}[theorem]{Lemma}
\newtheorem{proposition}[theorem]{Proposition}
\newenvironment{proof}[1][Proof]{\noindent\textbf{#1.} }{\ \rule{0.5em}{0.5em}}
\begin{document}

\title{Some implications of a conjecture of Zabrocki to the action of $S_{n}$ on
polynomial differential forms }
\author{Nolan R. Wallach}
\maketitle

\begin{abstract}
The symmetric group acts on polynomial differential forms on $\mathbb{R}^{n}$
through its action by permuting the coordinates. In this paper the $S_{n}%
$-invariants are shown to be freely generated by the elementary symmetric
polynomials and their exterior derivatives. A basis of the alternants in the
quotient of the ideal generated by the homogeneous invariants of positive
degree is given. In addition, the highest bigraded degrees are given for the
quotient. All of these results are consistant with predictions derived by
Garsia and Romero from a recent conjecture of Zabrocki.

\end{abstract}

\section{Introduction}

Let $A$ be the free algebra generated by  $x_{1},..,x_{n},y_{1},...,y_{n}%
,\theta_{1},...,\theta_{n}$ subject to the following relations: the
$\theta_{i}$ commute with the $x_{j}$and $y_{k}$, the $x_{i}$ and $y_{j}$ all
commute and $\theta_{i}\theta_{j}=-\theta_{j}\theta_{i}$.  Then $A$ is triply
graded by degree. Let $S_{n}$ act on $A$ by permuting the indices in the
$x_{i},y_{i}$ and $\theta_{i}$ in the same way. Let $I$ be the ideal in $A$
generated by the homogeneous $S_{n}$--invariants of positive degree. Mike
Zabrocki has made a conjecture about the trigraded Hilbert series of $A/I$ in
[Z] in relation to the Delta conjecture of algebraic combinatorics. Adriano
Garsia showed me what Zabrocki's conjecture implies for only one set of
commuting variables. This is equivalent to the action of $S_{n}$ on polynomial
differential forms and he suggested a consequence: the Hilbert series of the
alternants in the corresponding specialized quotient (see Theorem 13). Marino
Romero derived an upper bound on the possible degrees of the commuting
variables when the degree in the alternating variable is fixed for an element
of the quotient (see Theorem 14). The purpose of this paper is to prove those consequences.

In addition we give a suggested way of generating the $S_{n}$--harmonic
polynomial differential forms from the $S_{n}$--invariant polynomials as a
conjecture at the end of the paper.

Many of the techniques of this paper can be applied in greater generality,
say, to general finite groups. Results analogous to those in [W] for the Weyl
algebra are true for the \textquotedblleft super-Weyl\textquotedblright%
\ algebra and could be useful in the proof of the conjecture (either mine or
Zabrocki's). We thank Brendon Rhodes for pointing out the paper of Louis
Solomon, [S].

\section{The setting}

Let $\Omega_{n}$ be the free algebra algebra over $\mathbb{R}$ on
$x_{1},...,x_{n},y_{1},...,y_{n}$ subject to the relations $x_{i}x_{j}%
=x_{j}x_{i,}y_{i}x_{j}=x_{j}y_{i},y_{i}y_{j}=-y_{j}y_{i}$. Then $\Omega_{n}$
is isomorphic with the algebra of differential forms on $\mathbb{R}^{n}$ with
polynomial coefficients if we take $y_{i}=dx_{i}$. Let $S_{n}$ act diagonally
by permuting the indices of the $x$'s and the $y$'s in the same way . This is
the same as the action of $S_{n}$ on the differential forms. It will be
convenient to use the language of differential forms. Our first task is to
describe the invariants. We can also look at $\Omega_{n}$ as space
$\mathbb{R}[x_{1},...,x_{n}]\otimes\wedge\mathbb{R}^{n}$ with $\wedge
\mathbb{R}^{n}$ the Grassmann algebra on the vector space $\mathbb{R}^{n}$ and
the tensor factors commuting. Then in particular $\Omega^{n}$ is a module for
the $S_{n}$--invariants in $\mathbb{R}[x_{1},...,x_{n}]$, that is,
$\mathbb{R}[x_{1},...,x_{n}]^{S_{n}}$. Let $H$ be the space of all $S_{n}
$--harmonic polynomials. Then, as is well known, the map%
\[
\mathbb{R}[x_{1},...,x_{n}]^{S_{n}}\otimes H\rightarrow\mathbb{R}%
[x_{1},...,x_{n}]
\]
given by $f\otimes h\mapsto fh$ is a linear bijection. This implies that the
linear map%
\[
\mathbb{R}[x_{1},...,x_{n}]^{S_{n}}\otimes\left(  H\otimes\wedge\mathbb{R}%
^{n}\right)  \rightarrow\Omega_{n}%
\]
given by $f\otimes\omega\mapsto f\omega$ is a linear bijection. We will denote
the multiplication in $\wedge\mathbb{R}^{n}$ by $\alpha,\beta\rightarrow
\alpha\wedge\beta$. Note that the space of $S_{n}$--invariants in
$\wedge\mathbb{R}^{n}$, $\left(  \wedge\mathbb{R}^{n}\right)  ^{S_{n}}$ is two
dimensional with homogeneous basis%
\[
1,u=\sum_{i=1}^{n}y_{i}=\sum_{i=1}^{n}dx_{i}.
\]
Let $\wedge^{k}\mathbb{R}^{n}$ be the space spanned by products of exactly $k$
of the $y_{i}$. Note that $\mathbb{R}^{n}=F\oplus\mathbb{R}u$ with $F$ the
irreducible representation corresponding to the partition $\left(
n-1,1\right)  $ (the first hook). Also $\wedge^{k}\mathbb{R}^{n}=\wedge
^{k}F\oplus u\wedge\wedge^{k-1}F.$ As is well known, $\wedge^{k}F$ is the
irreducible representation of $S_{n}$ corresponding to the partition $\left(
n-k,1^{k}\right)  $ (the $k$--th hook).

If $\Lambda$ is a partition of $n$ let $F^{\Lambda}$ denote the corresponding
irreducible representation of $S_{n}.$ Thus $F=F^{(n-1,1)}$.

Put the inner product $(...,...)$ on the real valued functions on $S_{n}$
given by
\[
(f,g)=\frac{1}{n!}\sum_{x\in S_{n}}f(x)g(x).
\]
The Schur orthogonality relations say that if $\chi_{\Lambda}$ is the
character of $F^{\Lambda}$ then
\[
(\chi_{\Lambda},\chi_{\mu})=\delta_{\Lambda,\mu}.
\]

Let $H=\oplus_{l=0}^{\binom{n}{2}}H^{k}$ with $H^{k}$ the homogenous elements
of $H$ of degree $k$. Let $m_{j,\Lambda}$ denote the multiplicity of the
representation $F^{\Lambda}$ in $H^{k}$. That is, if $\eta_{j}$ is the
character of $H^{j}$ as a representation of $S_{n}$ then%
\[
m_{j,\Lambda}=(\eta_{j},\chi_{\Lambda}).
\]
Note that the multiplicity of $F^{n-k,1^{k}}$ in $\wedge^{l}\mathbb{R}^{n}$ is
$0$ if $l\notin\left\{  k,k+1\right\}  $ and one if $l\in\left\{
k,k+1\right\}  $. Set $\gamma_{k}$ equal to the character of the
representation $\wedge^{k}\mathbb{R}^{n}$. Then $\gamma_{k}=\chi_{\left(
n-k,1^{k}\right)  }+\chi_{(n-k+1,1^{k-1})}$ Define the graded character of $H
$ to be $\sum_{j=0}^{\binom{n}{2}}q^{j}\eta_{j}$ and that of $\wedge
\mathbb{R}^{n}$ to be $\sum_{k=0}^{n}t^{k}\gamma_{k}$ \ The bigraded character
of $H\otimes\wedge\mathbb{R}^{n}$ is $\sum_{j,k}q^{j}t^{k}\eta_{j}\gamma_{k}$.

\begin{lemma}
The bigraded character of $H\otimes\wedge\mathbb{R}^{n}$ is%
\[
\sum_{k,l}m_{l,\Lambda}q^{l}(t^{k}+t^{k+1})\chi_{\Lambda}\chi_{\left(
n-k,1^{k}\right)  }.
\]

\end{lemma}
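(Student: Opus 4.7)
The plan is to obtain the formula by a direct substitution starting from the bigraded character $\sum_{j,k} q^j t^k \eta_j \gamma_k$ and rewriting both character factors.

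First I would expand $\eta_j$ in its irreducible decomposition: by the definition $m_{j,\Lambda}=(\eta_j,\chi_\Lambda)$ and Schur orthogonality, we have $\eta_j = \sum_\Lambda m_{j,\Lambda}\chi_\Lambda$. Then I would substitute the identity $\gamma_k = \chi_{(n-k,1^k)} + \chi_{(n-k+1,1^{k-1})}$, already derived in the excerpt from the observation that $F^{(n-k,1^k)}$ occurs in $\wedge^l\mathbb{R}^n$ exactly when $l\in\{k,k+1\}$, each time with multiplicity one. This turns the bigraded character into
\[
\sum_{j,k,\Lambda} m_{j,\Lambda}\, q^j t^k\, \chi_\Lambda\bigl(\chi_{(n-k,1^k)}+\chi_{(n-k+1,1^{k-1})}\bigr).
\]

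Next I would reindex the second piece. In the sum $\sum_{j,k,\Lambda} m_{j,\Lambda} q^j t^k \chi_\Lambda \chi_{(n-k+1,1^{k-1})}$, replace $k$ by $k+1$ (noting that the $k=0$ term vanishes, since $\chi_{(n+1,1^{-1})}$ is not present and the original $\gamma_0 = \chi_{(n)}$ only contributes the $\chi_{(n-k,1^k)}$ term at $k=0$). This turns that summand into $\sum_{j,k,\Lambda} m_{j,\Lambda} q^j t^{k+1} \chi_\Lambda \chi_{(n-k,1^k)}$. Combining with the first piece and renaming $j$ as $l$ yields exactly
\[
\sum_{k,l,\Lambda} m_{l,\Lambda}\, q^l(t^k+t^{k+1})\, \chi_\Lambda\, \chi_{(n-k,1^k)},
\]
which is the asserted formula (with the sum over $\Lambda$ left implicit, as in the statement).

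There is no real obstacle here; the only thing to be careful about is the index shift and the boundary term at $k=0$ (and at $k=n$, where $\chi_{(n-k,1^k)}$ with $k=n$ is understood to be $\chi_{(0,1^n)}$ or absent — in either case the bookkeeping is consistent because $\wedge^{n+1}\mathbb{R}^n=0$). The content of the lemma is essentially an identity of formal characters that packages the graded decomposition of $\wedge\mathbb{R}^n$ into hook representations together with the tensor $\eta_j\chi_{(n-k,1^k)}$ appearing on the right, so once the two substitutions and the reindexing are performed, the equality is immediate.
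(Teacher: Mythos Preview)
Your proof is correct and follows essentially the same approach as the paper: the paper's one-line argument just rewrites $\sum_{k=0}^{n} t^{k}\gamma_{k}=\sum_{k=0}^{n-1}(t^{k}+t^{k+1})\chi_{(n-k,1^{k})}$ via the identity $\gamma_{k}=\chi_{(n-k,1^{k})}+\chi_{(n-k+1,1^{k-1})}$ and the same index shift you perform, leaving the expansion $\eta_{l}=\sum_{\Lambda} m_{l,\Lambda}\chi_{\Lambda}$ implicit. Your handling of the boundary terms at $k=0$ and $k=n$ is correct and is exactly what is needed to make the reindexing honest.
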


\begin{proof}
By the above $\sum_{k=0}^{n}t^{k}\gamma_{k}=\sum_{k=0}^{n-1}(t^{k}%
+t^{k+1})\chi_{n-k,1^{k}}$.
\end{proof}

Noting that
\[
\left(  \chi_{\Lambda}\chi_{\mu},1\right)  =\left(  \chi_{\Lambda},\chi_{\mu
}\right)  =\delta_{\Lambda,\mu}%
\]
we have the corollary.

\begin{corollary}
\label{full-hilb}The bigraded Hilbert series of the $S_{n}$ invariants in
$H\otimes\wedge\mathbb{R}^{n}$ is%
\[
\sum_{k,l}m_{l,\left(  n-k,1^{k}\right)  }q^{l}(t^{k}+t^{k+1}).
\]
Thus the bigraded Hilbert series of $\Omega_{n}^{S_{n}}$ is
\[
\frac{\sum_{k,l}m_{l,\left(  n-k,1^{k}\right)  }q^{l}(t^{k}+t^{k+1})}%
{\prod_{j=1}^{n}\left(  1-q^{j}\right)  }.
\]

\end{corollary}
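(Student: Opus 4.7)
The plan is to deduce both assertions quickly from the lemma together with the tensor product decomposition established earlier in the section. First, to extract the Hilbert series of the $S_n$-invariants in $H\otimes\wedge\mathbb{R}^n$, I would take the bigraded character formula from the lemma,
\[
\sum_{k,l,\Lambda} m_{l,\Lambda}\, q^{l}(t^{k}+t^{k+1})\,\chi_{\Lambda}\chi_{(n-k,1^{k})},
\]
and apply the displayed identity $(\chi_{\Lambda}\chi_{\mu},1)=(\chi_{\Lambda},\chi_{\mu})=\delta_{\Lambda,\mu}$ coefficient by coefficient in $q$ and $t$. Pairing with the trivial character annihilates every term except those with $\Lambda=(n-k,1^{k})$, which leaves exactly $\sum_{k,l} m_{l,(n-k,1^{k})}q^{l}(t^{k}+t^{k+1})$.

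For the second statement, I would use the $S_n$-equivariant linear bijection
\[
\mathbb{R}[x_{1},\dots,x_{n}]^{S_{n}}\otimes(H\otimes\wedge\mathbb{R}^{n})\longrightarrow \Omega_{n}
\]
established in the setup, noting that the map is bigraded (the $q$-degree tracks polynomial degree in $x$ on both sides, the $t$-degree tracks exterior degree in $y$) and that $S_{n}$ acts trivially on the first tensor factor. Taking $S_n$-invariants on both sides gives a bigraded isomorphism
\[
\mathbb{R}[x_{1},\dots,x_{n}]^{S_{n}}\otimes (H\otimes\wedge\mathbb{R}^{n})^{S_{n}}\longrightarrow \Omega_{n}^{S_{n}}.
\]
The classical fact that $\mathbb{R}[x_{1},\dots,x_{n}]^{S_{n}}$ is a polynomial ring on the elementary symmetric polynomials in degrees $1,2,\dots,n$ gives its bigraded Hilbert series as $\prod_{j=1}^{n}(1-q^{j})^{-1}$ (with no $t$-contribution). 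Multiplying by the Hilbert series of $(H\otimes\wedge\mathbb{R}^{n})^{S_{n}}$ computed in the previous paragraph yields the claimed formula.

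There is no real obstacle; the only thing to verify is the compatibility of the bigradings, namely that $q$ indexes degree in the $x$-variables on both sides of the isomorphism and $t$ indexes degree in the $y$-variables. This is immediate from the way $H\subset \mathbb{R}[x_{1},\dots,x_{n}]$ sits inside the polynomial factor and $\wedge\mathbb{R}^{n}$ supplies the exterior factor in $\Omega_n=\mathbb{R}[x_{1},\dots,x_{n}]\otimes\wedge\mathbb{R}^{n}$.
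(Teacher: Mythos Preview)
Your proposal is correct and follows essentially the same approach as the paper. The paper's own proof is terser---it records only the key identity $(\chi_{\Lambda}\chi_{\mu},1)=(\chi_{\Lambda},\chi_{\mu})=\delta_{\Lambda,\mu}$ and declares the corollary immediate---but your argument spells out exactly the two steps the paper leaves implicit: pairing the bigraded character from the lemma with the trivial character, and then invoking the $S_n$-equivariant bijection $\mathbb{R}[x_1,\dots,x_n]^{S_n}\otimes(H\otimes\wedge\mathbb{R}^n)\to\Omega_n$ from the setup to pass to $\Omega_n^{S_n}$.
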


\section{The $S_{n}$ invariants in $\Omega_{n}.$}

Observe that if $f$ is an $S_{n}$--invariant polynomial in $x_{1},...,x_{n}$
then%
\[
df=\sum\frac{\partial f}{\partial x_{i}}dx_{i}%
\]
is an $S_{n}$ invariant in $\Omega_{n}$. Set (as usual) $p_{j}=p_{j,n}%
=\sum_{i=1}^{n}x_{i}^{j}$. Then, clearly $dp_{i_{1}}\wedge dp_{i_{2}}%
\wedge\cdots\wedge dp_{i_{k}}\in\Omega_{n}^{S_{n}}$.

The following result is a direct consequence of the theorem of [S] which is
the same statement for any finite reflection group. We include a proof for the
benefit of the audience only interested in the symmetric group and since its
corollary on the multiplicity of the hook representations is important to the
rest of the paper.

\begin{proposition}
$\Omega_{n}^{S_{n}}$ is the direct sum
\[
\oplus_{k=0}^{n}\oplus_{1\leq i_{1}<...<i_{k}\leq n}\mathbb{R}[x_{1}%
,...,x_{n}]^{S_{n}}dp_{i_{1}}\wedge dp_{i_{2}}\wedge\cdots\wedge dp_{i_{k}}.
\]

\end{proposition}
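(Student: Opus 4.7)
My plan is to prove the direct-sum decomposition in two stages: first show that the sum is indeed direct (linear independence of the wedge products $dp_{i_1}\wedge\cdots\wedge dp_{i_k}$ over $\mathbb{R}[x_1,\ldots,x_n]^{S_n}$), then show the sum fills all of $\Omega_n^{S_n}$ by comparing bigraded Hilbert series with Corollary~\ref{full-hilb}.

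For linear independence, I would expand $dp_i = i\sum_j x_j^{i-1}\,dx_j$ so that each wedge $dp_{i_1}\wedge\cdots\wedge dp_{i_k}$ becomes $\sum_J \det(M_{I,J})\,dx_{j_1}\wedge\cdots\wedge dx_{j_k}$, the coefficients being the $k\times k$ minors of the Jacobian $M=(\partial p_i/\partial x_j)$. Since the full Jacobian determinant $\det M = n!\prod_{i<j}(x_i-x_j)$ is the nonzero Vandermonde, $dp_1,\ldots,dp_n$ form a basis of $\Omega_n^1$ over the fraction field $\mathbb{R}(x_1,\ldots,x_n)$; the wedge products $dp_I$ are then linearly independent over that field, forcing any $\mathbb{R}[x_1,\ldots,x_n]^{S_n}$-coefficients in a purported relation to vanish.

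For the spanning step, $dp_j$ has bidegree $(j-1,1)$ in $(q,t)$, so the proposed direct sum has bigraded Hilbert series
\[
\frac{\prod_{j=1}^n\bigl(1+tq^{j-1}\bigr)}{\prod_{j=1}^n\bigl(1-q^j\bigr)},
\]
while Corollary~\ref{full-hilb} gives
\[
\frac{\sum_{k,l} m_{l,(n-k,1^k)}\,q^l\bigl(t^k+t^{k+1}\bigr)}{\prod_{j=1}^n\bigl(1-q^j\bigr)}
\]
for $\Omega_n^{S_n}$. Using the containment from the first step, equality reduces to the polynomial identity
\[
\prod_{j=1}^n\bigl(1+tq^{j-1}\bigr) = \sum_{k,l} m_{l,(n-k,1^k)}\,q^l\bigl(t^k+t^{k+1}\bigr).
\]
The coefficient of $t^k$ on the left is $q^{\binom{k}{2}}\binom{n}{k}_q$ by the $q$-binomial theorem; on the right it is the sum of the graded multiplicities of the hooks $\chi_{(n-k,1^k)}$ and $\chi_{(n-k+1,1^{k-1})}$ in the coinvariant algebra $H$, i.e.\ the sum of their fake degrees.

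I expect this last identity to be the main obstacle: the Jacobian step is routine, but matching the character-theoretic description of $\Omega_n^{S_n}$ supplied by Corollary~\ref{full-hilb} to the explicit product $\prod_j(1+tq^{j-1})$ ultimately rests on the classical fake-degree formula for hook characters of $S_n$, which one can establish via principal specialization of Schur functions or by direct manipulation of Gaussian binomial identities.
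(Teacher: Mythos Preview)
Your linear-independence argument via the Jacobian/Vandermonde is essentially the same as the paper's. Your spanning argument, however, takes a genuinely different route. The paper proves spanning directly by induction on $n$: an $S_n$-invariant $k$-form is split into pieces with and without $dx_n$, each piece is $S_{n-1}$-invariant in the first $n-1$ variables, the inductive hypothesis rewrites everything in the $dp_{i,n-1}$, the substitution $p_{j,n-1}=p_{j,n}-x_n^{j}$ brings back the $dp_{i,n}$, and a final average over $S_n$ kills the residual $x_n^j$ and $dx_n$. From this proposition the paper then \emph{derives} the hook fake-degree formula $\sum_j m_{j,(n-k,1^k)}q^j = e_k(q,\ldots,q^{n-1})$ as Corollary~\ref{mult}. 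You instead assume that fake-degree identity and close the argument by a Hilbert-series comparison with Corollary~\ref{full-hilb}. So within the paper's internal logic your proof would be circular: Corollary~\ref{mult} sits downstream of this proposition. Externally your approach is fine, since the hook fake-degree formula is classical and can be established independently (principal specialization, $q$-binomial identities, etc.); it trades the explicit induction for a clean dimension count, at the cost of importing a nontrivial representation-theoretic input that the paper deliberately obtains as a byproduct.
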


\begin{proof}
Set
\[
\Omega_{n}^{k}=\oplus_{1\leq i_{1}<...<i_{k}\leq n}\mathbb{R}[x_{1}%
,...,x_{n}]^{S_{n}}dx_{i_{1}}\wedge dx_{i_{2}}\wedge\cdots\wedge dx_{i_{k}}.
\]
We now prove the result induction on $n$ leaving the cases $n=1,2$ to the
reader. Assume the result for $n-1\geq2$. Let $\omega\in\left(  \Omega_{n}%
^{k}\right)  ^{S_{n}}$. Then
\begin{align*}
\omega & =\sum_{1\leq i_{1}<...<i_{k}<n}a_{i_{1}i_{2}...i_{k}}(x)dx_{i_{1}%
}\wedge dx_{i_{2}}\wedge\cdots\wedge dx_{i_{k}}+\\
& \sum_{1\leq l_{1}<...<l_{k-1}<n}b_{l_{1}l_{2}...l_{k-1}}(x)dx_{l_{1}}\wedge
dx_{l_{2}}\wedge\cdots\wedge dx_{l_{k-1}}\wedge dx_{n}.
\end{align*}
Set $x=(x^{\prime},x_{n})$ then writing $a_{i_{1}i_{2}...i_{k}}(x)=\sum
_{j}a_{i_{1}i_{2}...i_{k};j}(x^{\prime})x_{n}^{j}$ we have
\begin{align*}
\omega & =\sum_{1\leq i_{1}<...<i_{k}<n}\sum_{j}a_{i_{1}i_{2}...i_{k}%
;,j}(x^{\prime})x_{n}^{j}dx_{i_{1}}\wedge dx_{i_{2}}\wedge\cdots\wedge
dx_{i_{k}}+\\
& \sum_{1\leq l_{1}<...<l_{k-1}<n}b_{l_{1}l_{2}...l_{k};.j}(x^{\prime}%
)x_{n}^{j}dx_{l_{1}}\wedge dx_{l_{2}}\wedge\cdots\wedge dx_{l_{k-1}}\wedge
dx_{n}.
\end{align*}
Noting that
\[
\sum_{1\leq i_{1}<...<i_{k}<n}\sum_{j}a_{i_{1}i_{2}...i_{k};,j}(x^{\prime
})dx_{i_{1}}\wedge dx_{i_{2}}\wedge\cdots\wedge dx_{i_{k}}\in\Omega
^{k}(\mathbb{R}^{n-1})^{S_{n-1}}%
\]
and%
\[
\sum_{1\leq l_{1}<...<l_{k-1}<n}b_{l_{1}l_{2}...l_{k-1};.j}(x^{\prime
})dx_{l_{1}}\wedge dx_{l_{2}}\wedge\cdots\wedge dx_{l_{k-1}}\in\Omega
^{k-1}(\mathbb{R}^{n-1})^{S_{n-1}}.
\]
We can apply the inductive hypothesis and the fact that $\mathbb{R}%
[x_{1},...,x_{n-1}]^{S_{n-1}}=\mathbb{R[}p_{1,n-1},...,p_{n-1,n-1}]$ to see
that%
\[
\sum_{1\leq i_{1}<...<i_{k}<n}a_{i_{1}i_{2}...i_{k};,j}(x^{\prime})dx_{i_{1}%
}\wedge dx_{i_{2}}\wedge\cdots\wedge dx_{i_{k}}%
\]%
\[
=\sum\varphi_{i_{1}i_{2}...i_{k};,j}(p_{1,n-1},...,p_{n-1,n-1})dp_{i_{1}%
,n-1}\wedge dp_{i_{2},n-1}\wedge\cdots\wedge dp_{i_{k},n-1}%
\]
and%
\[
\sum_{1\leq l_{1}<...<l_{k-1}<n}b_{l_{1}l_{2}...l_{k-1};.j}(x^{\prime
})dx_{i_{1}}\wedge dx_{i_{2}}\wedge\cdots\wedge dx_{i_{k-1}}%
\]%
\[
=\sum_{1\leq l_{1}<...<l_{k-1}<n}\psi_{l_{1}l_{2}...l_{k-1};.j}(p_{1,n-1}%
,...,p_{n-1,n-1})dp_{l_{1},n-1}\wedge dp_{l_{2},n-1}\wedge\cdots\wedge
dp_{l_{k-1},n-1}.
\]
Observing that $p_{j,n-1}=p_{j,n}-x_{n}^{j}$ \ and thus $dp_{j,n-1}%
=dp_{j,n}-jx_{n}^{j-1}dx_{n}$. We can expand out once again and find
($p_{j}=p_{j,n}$)%
\[
\omega=\sum_{j,1\leq i_{1}<...<i_{k}<n}\alpha_{i_{1}i_{2}...i_{k};,j}%
(p_{1},..,p_{n-1})x_{n}^{j}dp_{i_{1}}\wedge dp_{i_{2}}\wedge\cdots\wedge
dp_{i_{k}}%
\]%
\[
+\sum_{j,1\leq l_{1}<...<l_{k-1}<n}\beta_{l_{1}l_{2}...l_{k-1};.j}%
(p_{1},..,p_{n-1})x_{n}^{j}dp_{l_{1}}\wedge dp_{l_{2}}\wedge\cdots\wedge
dp_{l_{k-1}}\wedge dx_{n}.
\]
The $S_{n}$ invariance implies that
\[
\omega=\frac{1}{n}\sum_{j,1\leq i_{1}<...<i_{k}<n}\alpha_{i_{1}i_{2}%
...i_{k};,j}(p_{1},..,p_{n-1})p_{j}dp_{i_{1}}\wedge dp_{i_{2}}\wedge
\cdots\wedge dp_{i_{k}}%
\]%
\[
+\frac{1}{n}\sum_{j,1\leq l_{1}<...<l_{k}<n}\frac{1}{j+1}\beta_{l_{1}%
l_{2}...l_{k-1};.j}(p_{1},..,p_{n-1})dp_{l_{1}}\wedge dp_{l_{2}}\wedge
\cdots\wedge dp_{l_{k-1}}\wedge dp_{j}.
\]
Finally,%
\[
p_{j}=u_{j}(p_{1},...,p_{n})
\]
with $u_{j}(t_{1},...,t_{n})$ a polynomial. So
\[
dp_{j}=\sum\frac{\partial u_{j}}{\partial t_{k}}(p_{1},...,p_{n})dp_{k}.
\]
This shows that
\[
\left(  \Omega_{n}^{k}\right)  \subset\sum_{I=1\leq i_{1}<...<i_{k}\leq
n}\mathbb{R}[x_{1},...,x_{n}]^{S_{n}}dp_{i_{1}}\wedge dp_{i_{2}}\wedge
\cdots\wedge dp_{i_{k}}..
\]
To show that the sum is direct observe that
\[
dp_{1}\wedge\cdots\wedge dp_{n}=n!\prod_{i<j}(x_{i}-x_{j})dx_{1}\wedge
\cdots\wedge dx_{n}.
\]
This implies that if
\[
\sum_{1\leq i_{1}<...<i_{k}\leq n}\gamma_{i_{1}i_{2}...i_{k}}(x)dp_{i_{1}%
}\wedge dp_{i_{2}}\wedge\cdots\wedge dp_{i_{k}}=0
\]
then $\gamma_{I}(x)=0$ if $\prod_{i<j}(x_{i}-x_{j})\neq0$. Thus the sum is direct.
\end{proof}

\begin{corollary}
$\Omega_{n}^{S_{n}}$ is freely generated by $p_{1},...,p_{n},dp_{1}%
,...,dp_{n}$ or $e_{1},...,e_{n},de_{1},...,de_{n}$(the $e_{i}$ are the
elementary symmetric functions).
\end{corollary}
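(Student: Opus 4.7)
The plan is to deduce both assertions directly from the preceding Proposition together with Newton's identities, with essentially no new work.

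First, the Proposition already exhibits $\Omega_n^{S_n}$ as a free $\mathbb{R}[x_1,\dots,x_n]^{S_n}$-module of rank $2^n$, with basis $\{dp_{i_1}\wedge\dots\wedge dp_{i_k}\mid 1\le i_1<\dots<i_k\le n\}$. Since $\mathbb{R}[x_1,\dots,x_n]^{S_n}=\mathbb{R}[p_1,\dots,p_n]$ is a polynomial algebra on the power sums, tensoring yields the identification $\Omega_n^{S_n}\cong\mathbb{R}[p_1,\dots,p_n]\otimes \wedge(dp_1,\dots,dp_n)$ of supercommutative algebras, which is exactly the free-generation statement for $p_1,\dots,p_n,dp_1,\dots,dp_n$.

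For the generators $e_1,\dots,e_n,de_1,\dots,de_n$, I would reduce the claim to showing that $(de_1,\dots,de_n)$ and $(dp_1,\dots,dp_n)$ span the same rank-$n$ free submodule over $\mathbb{R}[x]^{S_n}$, since the $e_i$'s already freely generate the invariant ring. Writing $e_i=\varphi_i(p_1,\dots,p_n)$ via Newton's identities, $\varphi_i$ depends only on $p_1,\dots,p_i$, so the Jacobian $A=(\partial\varphi_i/\partial p_j)$ is lower triangular, with diagonal entries $(-1)^{i-1}/i$ that are nonzero \emph{constants}. The identity $de_i=\sum_j A_{ij}(p_1,\dots,p_n)\,dp_j$ then displays $(de_1,\dots,de_n)$ as the image of $(dp_1,\dots,dp_n)$ under a matrix invertible over $\mathbb{R}[x]^{S_n}$, so the exterior products $de_{i_1}\wedge\dots\wedge de_{i_k}$ again form a free $\mathbb{R}[x]^{S_n}$-basis of $\Omega_n^{S_n}$, completing the proof.

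The argument has no serious obstacle; the only technical point to be careful about is that one needs $A$ to be invertible over the \emph{ring} $\mathbb{R}[p_1,\dots,p_n]$, not merely at generic points. Newton's identities fortunately deliver a strictly triangular transition with constant nonzero diagonal, making this invertibility immediate; one could alternatively re-run the proof of the Proposition verbatim with $e_i$ in place of $p_i$, using the fact that $de_1\wedge\dots\wedge de_n$ is likewise a nonzero constant multiple of the Vandermonde times $dx_1\wedge\dots\wedge dx_n$.
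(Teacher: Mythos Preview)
Your proposal is correct and follows exactly the route the paper intends: the paper states this corollary without proof, as an immediate consequence of the preceding Proposition, and your argument simply supplies the details (including the Newton-identity triangularity for passing from the $p_i$'s to the $e_i$'s) that the paper leaves implicit.
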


The assertion using the elementary symmetric polynomials is probably true over
$\mathbb{Z}$.

\begin{corollary}
Set $\mu_{l,k}$ equal to the dimension of the space of $S_{n}$--invariant $k $
forms with polynomial coefficients homogeneous of degree $l$ in $x_{1}%
,...,x_{n}$. Then%
\[
\sum_{l,k}q^{l}t^{k}\mu_{l,k}=\prod_{j=1}^{n}\frac{1+q^{j-1}t}{1-q^{j}}.
\]

\end{corollary}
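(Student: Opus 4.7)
The plan is to read off the bigraded Hilbert series directly from the free generation statement of the preceding corollary. That corollary says $\Omega_n^{S_n}$ is freely generated by $p_1,\dots,p_n$ (as commuting polynomial generators) together with $dp_1,\dots,dp_n$ (as anticommuting wedge generators). So the first step is simply to record the bidegrees of these generators: since $p_j$ has $x$-degree $j$ and $y$-degree $0$, while $dp_j=\sum_i jx_i^{j-1}dx_i$ has $x$-degree $j-1$ and $y$-degree $1$, the contribution to the bigraded Hilbert series is $q^j$ for $p_j$ and $q^{j-1}t$ for $dp_j$.

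The second step is to combine these contributions. Because the $p_j$ commute freely, each one contributes a factor $1/(1-q^j)$, giving the familiar $\prod_j 1/(1-q^j)$ for the symmetric polynomial part. Because the $dp_j$ anticommute (they are odd elements of the Grassmann factor $\wedge\mathbb{R}^n$), each one appears with exponent $0$ or $1$ in any basis monomial, and so contributes a factor $(1+q^{j-1}t)$. Multiplying over $j=1,\dots,n$ yields
\[
\sum_{l,k}q^{l}t^{k}\mu_{l,k}=\prod_{j=1}^{n}\frac{1+q^{j-1}t}{1-q^{j}}.
\]

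I do not expect any real obstacle here, because the work has already been done in the previous proposition: the directness of the sum $\oplus_{i_1<\cdots<i_k}\mathbb{R}[x]^{S_n}\,dp_{i_1}\wedge\cdots\wedge dp_{i_k}$ together with the algebraic independence of $p_1,\dots,p_n$ over $\mathbb{R}$ is exactly the statement that the bigraded character factors as the product above. The only item worth making explicit in the write-up is that a basis of $\Omega_n^{S_n}$ is given by the products $p_1^{a_1}\cdots p_n^{a_n}\,dp_{i_1}\wedge\cdots\wedge dp_{i_k}$ with $a_j\geq 0$ and $1\leq i_1<\cdots<i_k\leq n$, from which the asserted generating function is immediate.
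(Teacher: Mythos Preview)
Your proof is correct and follows essentially the same approach as the paper: you use the free generation of $\Omega_n^{S_n}$ by $p_1,\dots,p_n,dp_1,\dots,dp_n$, record the bidegrees, and multiply the contributions $1/(1-q^j)$ for the commuting generators and $(1+q^{j-1}t)$ for the anticommuting ones. The paper's proof is just a terser version of exactly this computation.
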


\begin{proof}
The Hilbert series of the algebra generated by $dp_{1},...,dp_{n}$ is
\[%
%TCIMACRO{\dprod _{j=1}^{n}}%
%BeginExpansion
{\displaystyle\prod_{j=1}^{n}}
%EndExpansion
\left(  1+q^{j-1}t\right)
\]
that for the algebra generated by $p_{1},...,p_{n}$ is
\[
\prod_{j=1}^{n}\frac{1}{1-q^{j}}.
\]

\end{proof}

\begin{corollary}
\label{mult}Recall that $m_{j,\Lambda}$ is the mutiplicity of $F^{\Lambda}$ in
$H^{j}$ then
\[
\sum m_{j,(n-k,1^{k})}q^{j}=e_{k}(q,q^{2},...,q^{n-1}).
\]

\end{corollary}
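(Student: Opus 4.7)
The plan is to derive the formula by equating two different expressions for the bigraded Hilbert series of $\Omega_n^{S_n}$. Corollary~\ref{full-hilb} gives this series as
\[
\frac{\sum_{k,l} m_{l,(n-k,1^k)}\, q^l(t^k+t^{k+1})}{\prod_{j=1}^n (1-q^j)},
\]
while the preceding corollary (using the free generation by $p_1,\ldots,p_n,dp_1,\ldots,dp_n$) computes the same series as $\prod_{j=1}^n \frac{1+q^{j-1}t}{1-q^j}$. Clearing denominators gives the identity
\[
\sum_{k,l} m_{l,(n-k,1^k)}\, q^l (t^k+t^{k+1}) = \prod_{j=1}^n (1+q^{j-1}t).
\]

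Writing $M_k(q) = \sum_l m_{l,(n-k,1^k)} q^l$ and reading off the coefficient of $t^k$, one obtains the recurrence
\[
M_k(q) + M_{k-1}(q) = e_k(1,q,q^2,\ldots,q^{n-1}),
\]
valid for $k \geq 1$ (with $M_{-1}=0$ and $M_k=0$ for $k \geq n$). The claim is that $M_k(q) = e_k(q,q^2,\ldots,q^{n-1})$, so I would use the classical recursion $e_k(x_1,\ldots,x_n) = e_k(x_2,\ldots,x_n) + x_1\, e_{k-1}(x_2,\ldots,x_n)$ specialized to $x_1=1$, $x_i = q^{i-1}$, which gives
\[
e_k(1,q,\ldots,q^{n-1}) = e_k(q,\ldots,q^{n-1}) + e_{k-1}(q,\ldots,q^{n-1}).
\]
So $N_k(q) := e_k(q,q^2,\ldots,q^{n-1})$ satisfies exactly the same recurrence as $M_k$.

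The difference $D_k(q) := M_k(q) - N_k(q)$ then satisfies $D_k = -D_{k-1}$, hence $D_k = (-1)^k D_0$. For the base case, $M_0(q)$ counts the trivial representation $F^{(n)}$ in the harmonics; since the trivial occurs only in $H^0$ (with multiplicity $1$), we get $M_0(q) = 1 = e_0(q,\ldots,q^{n-1}) = N_0(q)$, so $D_0 = 0$ and the result follows.

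There is no serious obstacle: the argument is essentially forced once the two Hilbert series expressions are compared. The only things worth double-checking are the edge cases---that $M_k = 0$ for $k \geq n$ (the hook $(n-k,1^k)$ does not exist) matches $e_k(q,\ldots,q^{n-1}) = 0$ for $k \geq n$ (only $n-1$ variables), and that $k=n-1$ gives the sign representation, whose sole appearance in $H^{\binom{n}{2}}$ matches $e_{n-1}(q,q^2,\ldots,q^{n-1}) = q^{\binom{n}{2}}$ as a sanity check.
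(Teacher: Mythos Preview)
Your proposal is correct and follows essentially the same route as the paper: both compare the two expressions for the bigraded Hilbert series of $\Omega_n^{S_n}$ to obtain the identity $\sum_{k,j} m_{j,(n-k,1^k)}\,q^j(t^k+t^{k+1}) = \prod_{j=1}^n(1+q^{j-1}t)$, then equate coefficients of $t^k$ and induct on $k$ from the base case $M_0=1$. Your write-up is in fact slightly more explicit than the paper's, since you spell out the elementary symmetric function identity $e_k(1,q,\ldots,q^{n-1}) = e_k(q,\ldots,q^{n-1}) + e_{k-1}(q,\ldots,q^{n-1})$ that drives the induction.
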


\begin{proof}
The lemma above and lemma \ref{full-hilb} imply that
\[
\overset{}{(\ast)}\sum_{k,j}m_{j,(n-k,1^{k})}q^{j}(t^{k}+t^{k+1})=\prod
_{j=1}^{n}\left(  1+q^{j-1}t\right)  .
\]
We prove the desired formula by induction on $k$. For the sake of simplicity
we will denote $m_{j,(n-k,1^{k})}$ by $m_{j,k}$ If $k=0$ then $m_{j,0}%
=\delta_{0,j}$ So we are looking at $1=e_{0}$ which is obviously true.
\ Assume for $k-1\geq0$ then the coefficient of $t^{k}$ on the left hand side
of the equation is
\[
\sum_{,j}m_{j,k}q^{j}+\sum_{,j}m_{j.k-1}q^{j}=\sum_{,j}m_{j,k}q^{j}%
+e_{k-1}(q,...,q^{n-1})
\]
by the inductive hypothesis. Also the coefficient of $t^{k}$ on the right hand
side of the equation $(\ast)$ is $e_{k}(1,q,...,q^{n-1})$. This completes the induction.
\end{proof}

\section{The upper bound}

On $\Omega_{n}$ put the inner product that is the tensor product of the usual
inner product on $\mathbb{R}[x_{1},...,x_{n}]$ and the inner product on
$\wedge\mathbb{R}^{n}$ such that the $dx_{i_{1}}\wedge dx_{i_{2}}\wedge
\cdots\wedge dx_{i_{k}}$ with $1\leq i_{1}<...<i_{k}\leq n$ form an
orthonormal basis. The next task is to study the orthogonal complement to the
ideal in $\Omega_{n}$ generated by the $S_{n}$ homogenous invariants of
positive total degree (here $\deg x_{i}=1$ and $\deg dx_{i}=1$). This implies
that we are studying the set of solutions to the following equations:%

\[
D_{k}\omega=0,\delta_{l}\omega=0,k=1,2,...,n,l=0,...,n-1.
\]
with
\[
D_{l}\sum_{1\leq i_{1}<...<i_{k}<n}h_{i_{1},...i_{k}}dx_{i_{1}}\wedge
dx_{i_{2}}\wedge\cdots\wedge dx_{i_{k}}=\sum_{i,1\leq i_{1}<...<i_{k}%
<n}\left(  \frac{\partial^{l}}{\partial x_{i}^{l}}h_{i_{1},...i_{k}}\right)
dx_{i_{1}}\wedge dx_{i_{2}}\wedge\cdots\wedge dx_{i_{k}}%
\]
and
\[
\delta_{l}\sum_{1\leq i_{1}<...<i_{k}<n}h_{i_{1},...i_{k}}dx_{i_{1}}\wedge
dx_{i_{2}}\wedge\cdots\wedge dx_{i_{k}}%
\]%
\[
=\sum_{,j,1\leq i_{1}<...<i_{k}<n}\left(  \frac{\partial^{l}}{\partial
x_{j}^{l}}h_{i_{1},...i_{k}}\right)  \iota(dx_{j})dx_{i_{1}}\wedge dx_{i_{2}%
}\wedge\cdots\wedge dx_{i_{k}}%
\]
and if $i_{1}<...<i_{k}$ then $\iota(dx_{j})dx_{i_{1}}\wedge\cdots\wedge
dx_{i_{k}}=0$ if $j\notin\{i_{1},...,i_{k}\}$ and if $j=i_{r}$ then
\[
\iota(dx_{j})dx_{i_{1}}\wedge\cdots\wedge dx_{i_{k}}=(-1)^{r-1}dx_{i_{1}%
}\wedge\cdots\wedge dx_{i_{r-1}}\wedge dx_{i_{r+1}}\wedge\cdots\wedge
dx_{i_{k}}.
\]
Thus we are studying the forms
\[
\omega=\sum h_{i_{1},...i_{k}}dx_{i_{1}}\wedge dx_{i_{2}}\wedge\cdots\wedge
dx_{i_{k}}%
\]
with $h_{i_{1},...i_{k}}$ an $S_{n}$ harmonic and $\delta_{k}\omega=0$ for
$k=0,1,...$. We will set $\delta_{k}=\delta_{k,n}$ when we need to take into
account the variables that are in play. Let $W_{r,s}$ be the space of all
$\omega=\sum h_{i_{1},...i_{s}}dx_{i_{1}}\wedge dx_{i_{2}}\wedge\cdots\wedge
dx_{i_{s}}$ with $h_{i_{1},...i_{s}}$ a harmonic of degree $r$.

Then the space we are considering consists of the elements $\omega\in W_{r,s}$
such that $\delta_{j}\omega=0$ for all $j=0,1,...,n-1$. We will denote this
space by $H_{r,s}$. \ Then $H_{r,s}$ is an $S_{n}$ invariant subspace of
$\ker\delta_{0}$ on $W_{r,s}$.

\begin{lemma}
\label{upper}Let $\xi_{r,s}$ be the character of the $S_{n}$ representation
$H_{r,s}$. If $\Lambda$ is a partition of $n$ then
\[
(\xi_{r,s},\chi_{\Lambda})\leq\sum_{\mu\vdash n}m_{r,\mu}(\chi_{\mu}%
\chi_{\left(  n-s,1^{s}\right)  },\chi_{\Lambda})\text{.}%
\]

\end{lemma}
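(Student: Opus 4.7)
The plan is to prove the bound by embedding $H_{r,s}$ as an $S_n$-submodule of a larger space whose isotypic decomposition can be read off directly. The key observation is that, among the defining conditions $\delta_0\omega = \delta_1\omega = \cdots = 0$, the single condition $\delta_0\omega = 0$ already cuts the ambient space $W_{r,s}$ down to something of exactly the character claimed on the right-hand side.

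First I would unpack the operator $\delta_0$. Inspecting the definition with $l=0$, the differential operators $\partial^0/\partial x_j^0$ are the identity, so $\delta_0$ leaves the coefficient $h_{i_1,\ldots,i_s}$ alone and applies $\sum_j \iota(dx_j) = \iota(u)$ to the form part, where $u = \sum_j dx_j$. Thus, under the identification $W_{r,s} \cong H^r \otimes \wedge^s \mathbb{R}^n$, the operator $\delta_0$ is $1 \otimes \iota(u)$; it is $S_n$-equivariant and acts only on the Grassmann factor.

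Next I would identify $\ker \iota(u)$ on $\wedge^s \mathbb{R}^n$. Using the decomposition $\mathbb{R}^n = F \oplus \mathbb{R}u$ with $F = F^{(n-1,1)}$ the standard representation, and noting that $u \perp F$ in the inner product making the $dx_i$ orthonormal, one has $\iota(u)\alpha = 0$ for $\alpha \in \wedge^s F$, while $\iota(u)(u \wedge \beta) = (u,u)\,\beta = n\beta$ for $\beta \in \wedge^{s-1} F$. Hence $\ker\iota(u)\big|_{\wedge^s \mathbb{R}^n} = \wedge^s F \cong F^{(n-s,1^s)}$. Combining with the previous step, $H_{r,s} \subset \ker\delta_0 \cap W_{r,s} = H^r \otimes \wedge^s F$, a submodule whose character is $\sum_\mu m_{r,\mu}\,\chi_\mu\,\chi_{(n-s,1^s)}$. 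Taking the inner product against $\chi_\Lambda$ and using that multiplicities are monotone on submodules yields the claimed inequality.

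There is no serious obstacle here: the lemma is a packaged observation that the single condition $\delta_0 = 0$ already projects away the $u \wedge \wedge^{s-1} F$ summand of $\wedge^s \mathbb{R}^n$, leaving only the hook piece $F^{(n-s,1^s)}$. The remaining conditions $\delta_l = 0$ for $l \geq 1$ would only tighten the bound, and are not needed for this one-sided estimate. The one small care-point is verifying that $\delta_0$ really factors as $1 \otimes \iota(u)$ on the tensor decomposition $W_{r,s} = H^r \otimes \wedge^s \mathbb{R}^n$, which is immediate from the formula defining $\delta_l$ at $l=0$.
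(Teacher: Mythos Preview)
Your proposal is correct and follows essentially the same approach as the paper: the paper's entire proof is the one-line observation that $\ker\delta_0$ in $\wedge^s\mathbb{R}^n$ is $\wedge^s F$, which is exactly what you establish (with more detail) via the factorization $\delta_0 = 1\otimes\iota(u)$ and the decomposition $\wedge^s\mathbb{R}^n = \wedge^s F \oplus u\wedge\wedge^{s-1}F$. Your write-up simply makes explicit the steps the paper leaves to the reader.
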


\begin{proof}
$\ker\delta_{0}$ in $\wedge^{k}\mathbb{R}^{n}$ is $\wedge^{k}F$.
\end{proof}

\section{The Hilbert series of the alternants}

The purpose of this section is to derive the bigraded Hilbert series of the
alternants in the orthogonal complement to the ideal generated by the
invariants of positive degree in $\Omega_{n}$. The amazing, fact in this case,
is that the upper bound of the preceding section is a lower bound we will show
this by constructing a bihomogeneous basis. The ultimate formula for the
Hilbert series was suggested by Adriano Garsia. The upper bound described
above as a Hilbert series is

\begin{lemma}
An upper bound for
\[
\sum_{r,s}q^{r}t^{s}\sum_{\mu\vdash n}m_{r,\mu}(\chi_{\mu}\chi_{\left(
n-s,1^{s}\right)  },sgn)
\]
is%
\[
\prod_{j=1}^{n-1}(q^{j}+t).
\]

\end{lemma}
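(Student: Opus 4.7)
The plan is to show that the displayed expression is in fact \emph{equal} to $\prod_{j=1}^{n-1}(q^{j}+t)$, so the claimed ``upper bound'' is sharp at this step. The argument reduces to three short calculations.

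First, I would collapse the inner sum over $\mu$ by computing $(\chi_{\mu}\chi_{(n-s,1^{s})},sgn)$. Using the adjunction $(\chi_{\mu}\chi_{\nu},sgn)=(\chi_{\mu},\chi_{\nu}\cdot sgn)$ together with the classical fact that tensoring an irreducible $S_{n}$-representation $F^{\lambda}$ with the sign character yields $F^{\lambda'}$ for the conjugate partition $\lambda'$, the hook $(n-s,1^{s})$ is sent to the hook $(s+1,1^{n-s-1})$. Schur orthogonality then collapses the sum to a single term:
\[
\sum_{\mu\vdash n}m_{r,\mu}(\chi_{\mu}\chi_{(n-s,1^{s})},sgn)=m_{r,(s+1,1^{n-s-1})}.
\]

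Second, I would apply Corollary \ref{mult} with hook parameter $k=n-s-1$, so that $(n-k,1^{k})=(s+1,1^{n-s-1})$ and
\[
\sum_{r}m_{r,(s+1,1^{n-s-1})}q^{r}=e_{n-s-1}(q,q^{2},\ldots,q^{n-1}).
\]
Multiplying by $t^{s}$ and summing over $s=0,\ldots,n-1$ puts the left-hand side of the lemma into the form $\sum_{s=0}^{n-1}t^{s}\,e_{n-s-1}(q,q^{2},\ldots,q^{n-1})$.

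Third, I would expand the product directly: in each factor of $\prod_{j=1}^{n-1}(q^{j}+t)$ one chooses either $t$ or $q^{j}$, and grouping by the number $s$ of factors from which $t$ is picked yields
\[
\prod_{j=1}^{n-1}(q^{j}+t)=\sum_{s=0}^{n-1}t^{s}\,e_{n-1-s}(q,q^{2},\ldots,q^{n-1}),
\]
which matches the expression produced by step two term-by-term in $t^{s}$.

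There is no real obstacle; the argument is essentially symmetric-function bookkeeping, the only external input being the standard involution $F^{\lambda}\otimes sgn\cong F^{\lambda'}$. It is worth noting that the three steps together yield \emph{equality}, not merely an inequality. This will matter in the next section, where one combines the present lemma with Lemma \ref{upper} and an explicit construction of alternants to show that the upper bound is actually achieved.
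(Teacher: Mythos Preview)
Your proof is correct and follows essentially the same route as the paper: collapse the inner sum via $(\chi_{\mu}\chi_{(n-s,1^{s})},sgn)=(\chi_{\mu},\chi_{(s+1,1^{n-s-1})})=\delta_{\mu,(s+1,1^{n-s-1})}$, then invoke Corollary~\ref{mult} with $k=n-s-1$, and finally identify $\sum_{s}t^{s}e_{n-1-s}(q,\ldots,q^{n-1})$ with the product. You are slightly more explicit than the paper in spelling out the third step and in noting that one actually obtains equality rather than a mere inequality, but the argument is the same.
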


\begin{proof}
Note that
\[
(\chi_{\mu}\chi_{\left(  n-s,1^{s}\right)  },sgn)=(sgn\chi_{\mu}\chi_{\left(
n-s,1^{s}\right)  },1).
\]
This is non-zero if and only if $sgn\chi_{\mu}=\chi_{\left(  n-s,1^{s}\right)
}$ and then it is $1$. That is, if and only if $\mu=(s+1,1^{n-s-1})$.
Corollary \ref{mult} says that
\[
\sum_{j}q^{j}m_{j,\left(  k+1,1^{n-k-1}\right)  }=e_{n-k-1}(q,...,q^{n-1}).
\]
The lemma now follows.
\end{proof}

Set $\partial_{j}=\frac{\partial}{\partial x_{j}}$ and define the operator
$d_{j}$ for $j=0,1,2,...$by%
\[
d_{j}\sum_{1\leq i_{1}<i_{2}<...<i_{k}\leq n}f_{i_{1}i_{2}...i_{k}}dx_{i_{1}%
}\wedge dx_{i_{2}}\wedge\cdots\wedge dx_{i_{k}}=
\]%
\[
\sum_{1\leq i_{1}<i_{2}<...<i_{k}\leq n}^{n}\sum_{l}\partial_{l}^{j}%
f_{i_{1}i_{2}...i_{k}}dx_{l}\wedge dx_{i_{1}}\wedge dx_{i_{2}}\wedge
\cdots\wedge dx_{i_{k}}.
\]
Notice that $d_{j}:W_{r,s}\rightarrow W_{r-j,s+1}$.

\begin{lemma}
$d_{r}$ and $\delta_{s}$ satisfy the anticommutation relation%
\[
d_{r}\delta_{s}+\delta_{s}d_{r}=D_{s+r}%
\]

\end{lemma}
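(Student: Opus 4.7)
The plan is to verify the identity directly on a single monomial form $f\,dx_I$ where $I=(i_1<\cdots<i_k)$, and then appeal to $\mathbb{R}$-linearity. All the ingredients needed are already in the statements of the definitions: $d_r$ and $\delta_s$ each differentiate coefficients (and commute with one another on coefficients since mixed partials commute), and $\iota(dx_j)$ is the standard interior product, which is an anti-derivation of $\wedge\mathbb{R}^n$.

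First I would compute $\delta_s d_r(f\,dx_I)$. By definition,
\[
d_r(f\,dx_I)=\sum_{l}\partial_l^r f\,\,dx_l\wedge dx_I,
\]
and then applying $\delta_s$ inserts a factor $\partial_j^s$ and contracts by $\iota(dx_j)$ summed over $j$. The crucial move is that since $\iota(dx_j)$ is an anti-derivation,
\[
\iota(dx_j)(dx_l\wedge dx_I)=\delta_{jl}\,dx_I-dx_l\wedge\iota(dx_j)(dx_I).
\]
Substituting, $\delta_s d_r(f\,dx_I)$ splits into a "diagonal" piece $\sum_l\partial_l^{s+r}f\,dx_I=D_{s+r}(f\,dx_I)$ and a "cross" piece $-\sum_{j,l}\partial_j^s\partial_l^r f\,\,dx_l\wedge\iota(dx_j)(dx_I)$.

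Next I would compute $d_r\delta_s(f\,dx_I)$. Here $\delta_s(f\,dx_I)=\sum_j\partial_j^s f\,\iota(dx_j)dx_I$, and then $d_r$ prepends $\sum_l\partial_l^r(\cdot)\,dx_l\wedge$. Because $\iota(dx_j)dx_I$ contains no $dx_l$ to interact with via the derivation rule (it is just being wedged in from the left), one obtains cleanly
\[
d_r\delta_s(f\,dx_I)=\sum_{j,l}\partial_j^s\partial_l^r f\,\,dx_l\wedge\iota(dx_j)(dx_I),
\]
using that partials commute. This is exactly the negative of the "cross" piece above.

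Adding the two computations, the cross pieces cancel and we are left with $D_{s+r}(f\,dx_I)$, proving the anticommutation relation. There is no real obstacle here; the only thing to be careful about is the sign in the anti-derivation identity for $\iota(dx_j)$ on $dx_l\wedge dx_I$, which is what produces the cancellation. The formula is the natural higher-order analogue of the Cartan magic formula $d\,\iota+\iota\,d=\mathcal{L}$, with $\mathcal{L}$ replaced by the coefficient-level operator $D_{s+r}$.
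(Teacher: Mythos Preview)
Your proof is correct and follows the same approach as the paper: both reduce the identity to the anti-derivation property of interior product, namely $\iota(dx_j)(dx_l\wedge dx_I)=\delta_{jl}\,dx_I-dx_l\wedge\iota(dx_j)(dx_I)$. The paper simply records this identity and declares the lemma a direct consequence, while you carry out the resulting computation explicitly on a monomial form; the substance is identical.
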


\begin{proof}
This is a direct consequence of
\[
\iota(dx_{j})dx_{k}\wedge dx_{i_{1}}\wedge dx_{i_{2}}\wedge\cdots\wedge
dx_{i_{k}}+dx_{k}\wedge\iota(dx_{j})dx_{i_{1}}\wedge dx_{i_{2}}\wedge
\cdots\wedge dx_{i_{k}}%
\]%
\[
=\delta_{jk}dx_{i_{1}}\wedge dx_{i_{2}}\wedge\cdots\wedge dx_{i_{k}}.
\]
If $n-1\geq m_{1}>m_{2}>...>m_{k}\geq1$ set
\[
\omega_{m_{1},m_{2},...,m_{k}}=d_{m_{1}}d_{m_{2}}\cdots d_{m_{k}}\Delta
\]
with
\[
\Delta=\prod_{1\leq i<j\leq n}(x_{i}-x_{j}).
\]
(the usual Vandermonde determinant). With the understanding that if $k=0$ then
$d_{m_{1}}d_{m_{2}}\cdots d_{m_{k}}\Delta=\Delta$ we have
\end{proof}

\begin{lemma}
$\omega_{m_{1},...,m_{k}}\in H_{\binom{n}{2}-\sum m_{i},k}$ and $s\omega
_{m_{1},...,m_{k}}=sgn(s)\omega_{m_{1},...,m_{k}}$.
\end{lemma}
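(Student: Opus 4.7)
The plan is to verify the three defining features of $H_{\binom{n}{2}-\sum m_i,\,k}$ (correct bidegree, harmonic coefficients, and annihilation by every $\delta_s$) by short inductions on $k$, using the anticommutation relation from the preceding lemma together with the fact that the Vandermonde $\Delta$ is $S_n$-harmonic. The bidegree is immediate by counting: $\Delta$ sits in polynomial degree $\binom{n}{2}$ and form degree $0$, and each $d_{m_i}$ lowers polynomial degree by $m_i$ while raising form degree by $1$, so $\omega_{m_1,\ldots,m_k}$ lies in polynomial degree $\binom{n}{2}-\sum m_i$ and form degree $k$.

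For harmonicity of coefficients I would induct on $k$, the base case $k=0$ being the classical fact that $\Delta$ is $S_n$-harmonic. For the step, the coefficients of $d_{m_k}\eta$ are $\mathbb{R}$-linear combinations of $\partial_l^{m_k}$ applied to coefficients of $\eta$; since each $D_j=\sum_l \partial_l^j$ commutes with every $\partial_l^{m_k}$, harmonicity is preserved at level $k$. For the $\delta_s$ condition I would induct on $k$ again, with $k=0$ trivial since $\Delta$ is a $0$-form. For the step, apply the anticommutation relation
\[
\delta_s d_{m_k} = D_{m_k+s} - d_{m_k}\delta_s
\]
to $\omega_{m_1,\ldots,m_{k-1}}$: the second term vanishes by the inductive hypothesis on $\delta_s$, and the first vanishes coefficient-wise because $m_k+s\ge 1$ and the coefficients of $\omega_{m_1,\ldots,m_{k-1}}$ are harmonic. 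One small point worth flagging: this step uses $D_l\eta=0$ for every $l\ge 1$ when $\eta$ is harmonic, not merely for $l\le n$; this is automatic via Newton's identities expressing each $D_l$ as a polynomial in $D_1,\ldots,D_n$. Together these three observations give $\omega_{m_1,\ldots,m_k}\in H_{\binom{n}{2}-\sum m_i,\,k}$.

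The sign transformation is immediate from $S_n$-equivariance of the $d_j$: each $d_j=\sum_l \partial_l^j\,dx_l\wedge(\cdot)$ is manifestly invariant under simultaneous permutation of the pairs $(\partial_l,dx_l)$, so
\[
s\,\omega_{m_1,\ldots,m_k} = d_{m_1}\cdots d_{m_k}(s\,\Delta) = sgn(s)\,\omega_{m_1,\ldots,m_k}
\]
since $s\Delta = sgn(s)\Delta$. There is no real obstacle here; the whole argument is driven by the anticommutation lemma and the harmonicity of $\Delta$, with the inductions doing only bookkeeping.
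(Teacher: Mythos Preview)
Your argument is correct and follows the same route as the paper: induct on $k$ and use the anticommutation relation $d_r\delta_s+\delta_s d_r=D_{s+r}$ together with harmonicity of $\Delta$ to kill both terms. The paper's proof is terser (it only writes out the $\delta_j$ step explicitly and leaves bidegree, harmonicity of coefficients, and the sign claim as evident), but the mechanism is identical; your remark that one needs $D_l=0$ on harmonics for all $l\ge 1$, handled via Newton's identities, is a nice point the paper suppresses.

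One small indexing slip: since $\omega_{m_1,\ldots,m_k}=d_{m_1}\omega_{m_2,\ldots,m_k}$, the operator you peel off in the inductive step should be $d_{m_1}$ acting on $\omega_{m_2,\ldots,m_k}$, not $d_{m_k}$ acting on $\omega_{m_1,\ldots,m_{k-1}}$ (the latter differs from $\omega_{m_1,\ldots,m_k}$ by a sign since the $d_j$ anticommute, so the conclusion is unaffected, but the notation as written is off).
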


\begin{proof}
By induction on $k$. If $k=0$ then it is clear that $\delta_{j}\Delta=0$ all
$j$. Assume for $k-1\geq0$ then
\[
\delta_{j}\omega_{m_{1},...,m_{k}}=\delta_{j}d_{m_{1}}\omega_{m_{2},...,m_{k}%
}=-d_{m_{1}}\delta_{j}\omega_{m_{2},...,m_{k}}+D_{j+m_{1}}\omega
_{m_{2},...,m_{k}}=0
\]
by the inductive hypothesis.
\end{proof}

\begin{lemma}
If $n-1\geq m_{1}>...>m_{k}\geq1$ then define $C(m_{1},...,m_{k})$ as follows:
let the complement of $\{n-m_{1},...,n-m_{k}\}$ be $\{j_{1},...,j_{n-k}\}$
with $j_{1}<j_{2}<...<j_{n-k}$ (note that $j_{n-k}=n$) \ then
\[
C(m_{1},...,m_{k})=(n-j_{1},...,n-j_{n-k-1}).
\]
One has%
\[
C(C(m_{1},...,m_{k}))=(m_{1},...,m_{k}).
\]
If
\[
\omega_{m_{1},...,m_{k}}=\sum_{1\leq i_{1}<...<i_{k}\leq n}h_{i_{1},...i_{k}%
}^{m_{1}...m_{k}}dx_{i_{1}}\wedge dx_{i_{2}}\wedge\cdots\wedge dx_{i_{k}}%
\]
then the coefficient of the monomial $x_{1}^{n-j_{1}}x_{2}^{n-j_{2}}\cdots
x_{n-k-1}^{n-j_{n-k-1}}$ in $h_{n-k+1,n-k,...,n}^{m_{1}...m_{k}}$ is $\pm
k!m_{1}!\cdots m_{k}!$. Finally if this monomial occurs in $\omega
_{r_{1},...,r_{k}}$ with $n-1\geq r_{1}>...>r_{k}\geq1$ then $r_{i}=m_{i}$ for
all $i=1,...,k$.
\end{lemma}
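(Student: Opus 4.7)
My plan is to compute $\omega_{m_1,\ldots,m_k}=d_{m_1}\cdots d_{m_k}\Delta$ explicitly from the determinant expression $\Delta=\sum_{\sigma\in S_n}\mathrm{sgn}(\sigma)\prod_{i=1}^n x_i^{n-\sigma(i)}$. Unwinding the definition of $d_j$ gives
\[
\omega_{m_1,\ldots,m_k}=\sum_{l_1,\ldots,l_k}\partial_{l_1}^{m_1}\cdots\partial_{l_k}^{m_k}\Delta\cdot dx_{l_1}\wedge\cdots\wedge dx_{l_k},
\]
so for $i_1<\cdots<i_k$ the coefficient $h^{m_1\ldots m_k}_{i_1,\ldots,i_k}$ equals $\sum_{\tau\in S_k}\mathrm{sgn}(\tau)\,\partial_{i_{\tau(1)}}^{m_1}\cdots\partial_{i_{\tau(k)}}^{m_k}\Delta$. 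The involution $C\circ C=\mathrm{id}$ is immediate: the definition presents $\{m_1,\ldots,m_k\}$ and $\{n-j_1,\ldots,n-j_{n-k-1}\}$ as a partition of $\{1,\ldots,n-1\}$, and this relation is manifestly symmetric.

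For the uniqueness claim, I substitute the determinant expansion into the formula for $h^{r_1\ldots r_k}_{i_1,\ldots,i_k}$ and ask which permutations $\sigma\in S_n$ can produce the target monomial $x_1^{n-j_1}\cdots x_{n-k-1}^{n-j_{n-k-1}}$ (with every variable $x_l$, $l\geq n-k$, having exponent $0$) in some coefficient of $\omega_{r_1,\ldots,r_k}$. For an index $l$ outside $I:=\{i_1,\ldots,i_k\}$ the resulting exponent of $x_l$ equals $n-\sigma(l)$, so any such $l\geq n-k$ forces $\sigma(l)=n$; injectivity of $\sigma$ then allows at most one such $l$, and since $|I|=k<|\{n-k,\ldots,n\}|=k+1$ this forces $I\subset\{n-k,\ldots,n\}$ with a unique excluded element $l^\ast$ satisfying $\sigma(l^\ast)=n$. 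The remaining matching conditions pin $\sigma$ on $\{1,\ldots,n-k-1\}$ to $\sigma(l)=j_l$ and require $\{\sigma(i):i\in I\}=\{n-r_1,\ldots,n-r_k\}$. Since $\sigma$ is a bijection, this last set equals the complement $\{1,\ldots,n\}\setminus\{j_1,\ldots,j_{n-k-1},n\}=\{n-m_1,\ldots,n-m_k\}$, so $\{r_1,\ldots,r_k\}=\{m_1,\ldots,m_k\}$ as sets, and strict decrease on both sides gives $r_t=m_t$ for all $t$.

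Specializing to the coefficient $h^{m_1\ldots m_k}_{n-k+1,\ldots,n}$ (so $I=\{n-k+1,\ldots,n\}$ and $l^\ast=n-k$), the contributing pairs $(\sigma,\tau)$ are parametrized by bijections $\rho$ of $I$. Writing $\sigma_0$ for the distinguished permutation with $\sigma_0(l)=j_l$ for $l\leq n-k$ and $\sigma_0(n-k+t)=n-m_t$, we have $\sigma=\sigma_0\rho$, and the relation $\sigma(i_{\tau(t)})=n-m_t$ identifies $\rho|_I$ with $\tau^{-1}$ under the natural identification $\mathrm{Sym}(I)\cong S_k$. Thus $\mathrm{sgn}(\sigma)\mathrm{sgn}(\tau)=\mathrm{sgn}(\sigma_0)$ on every one of the $k!$ contributing pairs, and each yields the same factorial factor $m_1!\cdots m_k!$ from the differentiations; summing gives $\pm k!\,m_1!\cdots m_k!$.

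The only real obstacle is this concluding sign count: one must verify that the $k!$ surviving terms do not cancel. The point is that interchanging two $\sigma$-values on $I$ forces the corresponding interchange in $\tau$, and the two resulting sign changes cancel, so $\mathrm{sgn}(\sigma)\mathrm{sgn}(\tau)$ is invariant across the parametrization and equals $\mathrm{sgn}(\sigma_0)$.
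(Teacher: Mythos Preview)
Your proof is correct and follows essentially the same route as the paper's: expand $\Delta$ as a signed sum over $S_n$, write $h^{m_1\ldots m_k}_{i_1,\ldots,i_k}=\sum_{\tau\in S_k}\mathrm{sgn}(\tau)\,\partial_{i_{\tau(1)}}^{m_1}\cdots\partial_{i_{\tau(k)}}^{m_k}\Delta$, identify the pairs $(\sigma,\tau)$ that yield the target monomial, and verify that the contributions combine to $\pm k!\,m_1!\cdots m_k!$. Your sign bookkeeping via $\sigma=\sigma_0\rho$ with $\rho|_I\leftrightarrow\tau^{-1}$ is exactly the paper's observation $\mathrm{sgn}(s_t)=\mathrm{sgn}(t)\,\mathrm{sgn}(s_I)$, just written out more explicitly; and your uniqueness argument is a mild strengthening (you first pin down $I\subset\{n-k,\ldots,n\}$ rather than fixing $I=\{n-k+1,\ldots,n\}$ at the outset), but the mechanism---bijectivity of $\sigma$ forcing $\{n-r_t\}$ to be the complement of $\{j_1,\ldots,j_{n-k}\}$---is identical.
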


\begin{proof}
The assertion about $C\circ C$ is easily seen. Set $\rho=(n-1,n-2,...,1,0)$.
Note that up to non-zero multiple the coefficient of $dx_{i_{1}}\wedge
\cdots\wedge dx_{i_{k}}$ with $1\leq i_{1}<i_{2}<...<i_{k}\leq n$ is
\[
\sum_{t\in S_{k}}sgn(t)\partial_{i_{t1}}^{m_{1}}\cdots\partial_{i_{tk}}%
^{m_{k}}\Delta=\sum_{s\in S_{n},t\in S_{k}}sgn(s)sgn(t)\partial_{i_{t1}%
}^{m_{1}}\cdots\partial_{i_{ik}}^{m_{k}}x^{s\rho}.
\]
So to have the monomial $x_{1}^{n-j_{1}}x_{2}^{n-j_{2}}\cdots x_{n-k-1}%
^{n-j_{n-k-1}}$ appear with $j_{i}=n-k+i,i=1,...,k$ we must have
$sj_{1}=1,...,sj_{n-k-1}=n-k-1$ and $s(n-m_{1})=m-k+t1,...,s(n-m_{k})=n-k+tk$
for some $t\in S_{k}$ \ This uniquely determines $s$ so we can denote it by
$s_{t}$. Thus the coefficient of $x_{1}^{n-j_{1}}x_{2}^{n-j_{2}}\cdots
x_{n-k-1}^{n-j_{n-k-1}}$ is
\[
m_{1}!\cdots m_{k}!\sum_{t\in S_{k}}sgn(s_{t})sgn(t).
\]
Note that $sgn(s_{t})=sgn(t)sgn(s_{I})$ ($I$ is the identity permutation of
$1,...,k$). Thus the coefficient is $\pm k!m_{1}!\cdots m_{k}!$. \ If this
monomial appears with non-zero coefficients in $h_{n-k+1,n-k,...,n}%
^{r_{1}...r_{k}}$ with $n-1\geq r_{1}>...>r_{k}\geq1$ then since $s\rho$ is
just a rearrangement of $(n-1,n-2,...,1,0)$ we must have $C(r_{1}%
,...,r_{k})=C(m_{1},...,m_{k}).$This completes the proof.
\end{proof}

\begin{lemma}
The set
\[
\{\omega_{m_{1},...,m_{k}}|n-1\geq m_{1}>....>m_{k}\geq1,k=1,...,n-1\}\cup
\{\Delta\}
\]
is linearly independent.
\end{lemma}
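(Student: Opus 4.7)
The plan is to reduce to the previous lemma, which has already done the hard work: it pinpoints, for each sequence $(m_1,\ldots,m_k)$, a specific ``signature monomial'' in a specific coefficient of $\omega_{m_1,\ldots,m_k}$ that is nonzero and that appears in no other $\omega_{r_1,\ldots,r_k}$ (for the same $k$).

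First I would separate by form-degree. Since $\omega_{m_1,\ldots,m_k}$ lies in $W_{\binom{n}{2}-\sum m_i,\,k}$, it is homogeneous of exterior degree $k$; any linear dependence splits into pieces, one for each $k$. Thus it suffices to prove, for each fixed $k\in\{1,\ldots,n-1\}$, that the family $\{\omega_{m_1,\ldots,m_k}:n-1\ge m_1>\cdots>m_k\ge1\}$ is linearly independent (the case $k=0$ being trivial since $\Delta\neq 0$).

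Next I would fix $k\ge 1$ and assume a relation
\[
\sum_{n-1\ge m_1>\cdots>m_k\ge 1} c_{m_1,\ldots,m_k}\,\omega_{m_1,\ldots,m_k}=0.
\]
Expanding each $\omega_{m_1,\ldots,m_k}=\sum_{i_1<\cdots<i_k} h^{m_1\cdots m_k}_{i_1,\ldots,i_k}\,dx_{i_1}\wedge\cdots\wedge dx_{i_k}$ and extracting the coefficient of $dx_{n-k+1}\wedge\cdots\wedge dx_n$ yields the polynomial identity
\[
\sum_{m_1>\cdots>m_k} c_{m_1,\ldots,m_k}\,h^{m_1\cdots m_k}_{n-k+1,\ldots,n}=0.
\]
Now fix an index sequence $(m_1,\ldots,m_k)$ and let $(j_1,\ldots,j_{n-k-1})=C(m_1,\ldots,m_k)$. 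By the previous lemma the monomial $x_1^{n-j_1}x_2^{n-j_2}\cdots x_{n-k-1}^{n-j_{n-k-1}}$ occurs in $h^{r_1\cdots r_k}_{n-k+1,\ldots,n}$ only when $(r_1,\ldots,r_k)=(m_1,\ldots,m_k)$, and there with the nonzero coefficient $\pm k!\,m_1!\cdots m_k!$. Reading off this monomial in the identity above forces $c_{m_1,\ldots,m_k}=0$. Since the sequence was arbitrary, all coefficients vanish.

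I do not expect a genuine obstacle here: the previous lemma is precisely the statement that each $\omega_{m_1,\ldots,m_k}$ carries a unique ``leading monomial'' (in both the wedge and polynomial factors) that identifies it, so linear independence follows by triangular extraction. The only minor care is making sure the form-degrees are separated first so that cross-degree cancellations cannot occur, but this is immediate from the bigrading on $\Omega_n$.
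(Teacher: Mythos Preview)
Your argument is correct and is essentially the paper's own proof, spelled out in more detail: reduce to a fixed exterior degree $k$, pass to the coefficient of $dx_{n-k+1}\wedge\cdots\wedge dx_n$, and then invoke the preceding lemma's ``unique signature monomial'' to force all coefficients to vanish. (One small notational slip: $C(m_1,\ldots,m_k)$ already equals the tuple of exponents $(n-j_1,\ldots,n-j_{n-k-1})$, not $(j_1,\ldots,j_{n-k-1})$; this does not affect the argument.)
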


\begin{proof}
It is enough to show that the $\omega_{m_{1},...,m_{k}}$ with $n-1\geq
m_{1}>....>m_{k}\geq1$ and fixed $k$ are linearly independent. The preceding
lemma implies that the functions $h_{i_{1},...i_{k}}^{m_{1}...m_{k}}$ (defined
in the statement) with $i_{j}=n-k+j,j=1,...,k$ are linearly independent.
\end{proof}

\begin{theorem}
The bigraded Hilbert series of the alternants in the orthogonal complement of
the ideal generated by the invariants of positive degree in $\Omega_{n}$ is
$\prod_{j=1}^{n-1}(q^{j}+t)$. That is if $\chi_{i,j}$ is the $S_{n}%
$--character of $H_{i,j}$ then
\[
\sum_{i,j}q^{i}t^{j}(\chi_{ij},sgn)=\prod_{j=1}^{n-1}(q^{j}+t).
\]

\end{theorem}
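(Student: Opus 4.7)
The plan is to combine the upper bound already established in the preceding lemma with a lower bound obtained by counting the explicit alternants $\omega_{m_1,\ldots,m_k}$ constructed above.

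First, recall from the previous lemma that for each partition $\Lambda$ we have the upper bound
\[
\sum_{r,s}q^{r}t^{s}(\xi_{r,s},\mathrm{sgn})\leq \sum_{r,s}q^{r}t^{s}\sum_{\mu\vdash n}m_{r,\mu}(\chi_{\mu}\chi_{(n-s,1^{s})},\mathrm{sgn})=\prod_{j=1}^{n-1}(q^{j}+t),
\]
where the equality on the right was verified using Corollary \ref{mult} and the fact that $\mathrm{sgn}\cdot\chi_{\mu}=\chi_{(n-s,1^{s})}$ forces $\mu=(s+1,1^{n-s-1})$.

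For the matching lower bound, I would assemble the family
\[
\{\omega_{m_{1},\ldots,m_{k}} : n-1\geq m_{1}>\cdots>m_{k}\geq 1,\ k=0,1,\ldots,n-1\},
\]
with the convention $\omega_{\emptyset}=\Delta$. By the lemmas proved above, each such $\omega_{m_{1},\ldots,m_{k}}$ lies in $H_{\binom{n}{2}-\sum m_i,\,k}$ and transforms by the sign character, and the whole set is linearly independent. Thus each $\omega_{m_{1},\ldots,m_{k}}$ contributes an independent alternant of bidegree $(q^{\binom{n}{2}-\sum m_{i}},\,t^{k})$ to the orthogonal complement of the ideal.

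Next I would compute the generating function of these bidegrees. The assignment $\{m_{1},\ldots,m_{k}\}\mapsto \{1,\ldots,n-1\}\setminus\{m_{1},\ldots,m_{k}\}$ is a bijection between strictly decreasing sequences in $\{1,\ldots,n-1\}$ and subsets of $\{1,\ldots,n-1\}$. Using $\binom{n}{2}=1+2+\cdots+(n-1)$, one gets
\[
\sum_{k=0}^{n-1}\sum_{n-1\geq m_{1}>\cdots>m_{k}\geq 1}q^{\binom{n}{2}-\sum m_{i}}t^{k} = \sum_{T\subseteq\{1,\ldots,n-1\}}t^{|T|}\prod_{j\notin T}q^{j}=\prod_{j=1}^{n-1}(q^{j}+t).
\]
This exactly matches the upper bound, so equality holds in every bidegree and the Hilbert series of the alternants is $\prod_{j=1}^{n-1}(q^{j}+t)$, as claimed.

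The main obstacle to this proof — establishing that the $\omega_{m_{1},\ldots,m_{k}}$ are genuinely in $H_{r,s}$, alternating, and linearly independent — has already been resolved in the preceding three lemmas (using the anticommutation $d_{r}\delta_{s}+\delta_{s}d_{r}=D_{s+r}$ together with the explicit monomial coefficient $\pm k!\,m_{1}!\cdots m_{k}!$ in $\omega_{m_{1},\ldots,m_{k}}$). So the theorem reduces to the bookkeeping identity above and an appeal to the already-proved upper bound.
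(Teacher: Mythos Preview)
Your proposal is correct and follows the same strategy as the paper: invoke the upper bound lemma, then use the linearly independent alternants $\omega_{m_1,\ldots,m_k}$ (together with $\Delta$) to show the generating function of their bidegrees equals $\prod_{j=1}^{n-1}(q^j+t)$, forcing equality. The only cosmetic difference is that the paper verifies the product identity by factoring out $q^{\binom{n}{2}}$ and recognizing $\prod_{j=1}^{n-1}(1+q^{-j}t)$, whereas you expand the product directly as a sum over subsets $T\subseteq\{1,\ldots,n-1\}$.
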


\begin{proof}
Lemma \ref{upper} says that the formula is an upper bound. The above lemma
says that bigraded Hilbert series of the space spanned by
\[
\{\omega_{m_{1},...,m_{k}}|n-1\geq m_{1}>....>m_{k}\geq1,k=1,...,n-1\}\cup
\{\Delta\}
\]
is
\[
q^{\binom{n}{2}}+\sum_{k=1}^{n-1}\sum_{n-1\geq m_{1}>....>m_{k}\geq1}%
q^{\binom{n}{2}-m_{1}-...-m_{k}}t^{k}%
\]%
\[
=q^{\binom{n}{2}}\left(  1+\sum_{k=1}^{n-1}\sum_{n-1\geq m_{1}>....>m_{k}%
\geq1}q^{-m_{1}-...-m_{k}}t^{k}\right)
\]%
\[
=q^{\binom{n}{2}}\prod_{j=1}^{n-1}(1+q^{-j}t)=\prod_{j=1}^{n-1}(q^{j}+t).
\]
Thus the upper bound is a lower bound!
\end{proof}

\section{The best possible upper bound on the degrees of the harmonic
$k$--forms}

In this section we will use coefficients in $\mathbb{C}$ rather than in
$\mathbb{R}$. We will prove a result that Marino Romero showed follows from
Zabrocki's conjecture. Let $\mathcal{I}$ denote the ideal generated by
$p_{1},...,p_{n},dp_{1},...,dp_{n}$ in $\Omega$. Here $p_{j}$ is the $j$-th
power sum. Note that
\[
d\mathcal{I}\subset\mathcal{I}.
\]
Indeed, if $\omega\in\mathcal{I}$ then
\[
\omega=\sum_{r}\sum_{1\leq i_{1}<...<i_{r}\leq n}f_{i_{1}i_{2}...i_{r}%
}dp_{i_{1}}\wedge\cdots\wedge dp_{i_{r}}%
\]
with $f_{i_{1}i_{2}...i_{r}}$ an element of the ideal $\mathcal{J}$ generated
by $p_{1},...,p_{n}$. Thus%
\[
\omega=\sum_{r}\sum_{1\leq i_{1}<...<i_{r}\leq n}df_{i_{1}i_{2}...i_{r}}\wedge
dp_{i_{1}}\wedge\cdots\wedge dp_{i_{r}}%
\]
But
\[
f_{i_{1}i_{2}...i_{r}}=\sum_{j}a_{i_{1}i_{2}...i_{r},j}p_{j}%
\]
so
\[
df_{i_{1}i_{2}...i_{r}}=\sum_{j}p_{j}da_{i_{1}i_{2}...i_{r},j}+\sum
_{j}a_{i_{1}i_{2}...i_{r},j}dp_{j}\in\mathcal{I}.
\]

Set $\Omega^{l,k}$ equal to the elements
\[
\sum_{1\leq i_{1}<...<i_{k}\leq n}f_{i_{1}i_{2}...i_{r}}dx_{i_{1}}\wedge
\cdots\wedge dx_{i_{k}}%
\]
with $f_{i_{1}i_{2}...i_{r}}$ a homogeneous polynomial of degree $l$.

\begin{theorem}
If $\omega\in\Omega^{l,k}$ and $l>\binom{n}{2}-\binom{k+1}{2}$ then $\omega
\in\mathcal{I}$. Furthermore the image of $\Omega^{\binom{n}{2}-\binom{k+1}%
{2},k}$ in $\Omega/\mathcal{I}$ is non-zero.
\end{theorem}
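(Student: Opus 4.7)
Plan.

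The statement divides naturally into sharpness (the ``furthermore'' part) and the containment $\Omega^{l,k}\subset\mathcal{I}$ for $l>\binom{n}{2}-\binom{k+1}{2}$.

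Sharpness is immediate from Theorem 13. The Hilbert series $\prod_{j=1}^{n-1}(q^{j}+t)$ of the alternants in $\Omega/\mathcal{I}$ has coefficient $1$ at $q^{\binom{n}{2}-\binom{k+1}{2}}t^{k}$, coming from the leading monomial $q^{k+1}q^{k+2}\cdots q^{n-1}\cdot t^{k}$ in the product. A non-zero representative is $\omega_{k,k-1,\dots,1}=d_{k}d_{k-1}\cdots d_{1}\Delta$, which lies in $\Omega^{\binom{n}{2}-\binom{k+1}{2},k}$ and, by the linear-independence lemma just before Theorem 13, has non-zero image in $\Omega/\mathcal{I}$.

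For the upper bound I would work with the identification $\Omega/\mathcal{I}\cong\mathcal{H}:=\bigoplus_{l,k}H_{l,k}$, the total space of ``harmonic forms'' (those $\omega$ whose coefficients are $S_{n}$-harmonic polynomials and which satisfy $\delta_{s}\omega=0$ for every $s=0,\dots,n-1$). The goal becomes $H_{l,k}=0$ for $l>\binom{n}{2}-\binom{k+1}{2}$, and I would attack this through the following spanning claim: $\mathcal{H}^{\ast,k}$ is linearly spanned by the elements $d_{m_{1}}d_{m_{2}}\cdots d_{m_{k}}(h)$ with $h\in H$ and $n-1\geq m_{1}>m_{2}>\cdots>m_{k}\geq 1$. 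Granting this, each generator has polynomial degree $\deg h-\sum_{i}m_{i}$, and since $\deg h\leq\binom{n}{2}$ (the top degree of $H$) while $\sum_{i}m_{i}\geq 1+2+\cdots+k=\binom{k+1}{2}$ for distinct positive $m_{i}$, this polynomial degree is bounded by $\binom{n}{2}-\binom{k+1}{2}$. That $d_{r}$ preserves $\mathcal{H}$ for $r\geq 1$ is a direct consequence of the anticommutation $d_{r}\delta_{s}+\delta_{s}d_{r}=D_{r+s}$ of the previous section together with the vanishing $D_{j}h=0$ for every harmonic $h$ and $j\geq 1$.

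The principal obstacle is the spanning claim. The approach I would take is via adjoints. In the inner product of Section 4 one has $(d_{r})^{\ast}=\tfrac{1}{r+1}\iota(dp_{r+1})$, while the orthogonal complement of $H$ in $\mathbb{R}[x]$ is $(p_{1},\dots,p_{n})\mathbb{R}[x]$. Hence the hypothetical orthogonality of $\omega\in\mathcal{H}^{l,k}$ to every $d_{m_{1}}\cdots d_{m_{k}}(H)$ is equivalent to the system of conditions
\[
\iota(dp_{m_{1}+1})\iota(dp_{m_{2}+1})\cdots\iota(dp_{m_{k}+1})\omega\in(p_{1},\dots,p_{n})\mathbb{R}[x]
\]
for every admissible tuple $(m_{1},\dots,m_{k})$. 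Because $dp_{1},\dots,dp_{n}$ form a basis of the cotangent space on the Zariski-open locus $\Delta\neq 0$ (recall $dp_{1}\wedge\cdots\wedge dp_{n}=n!\,\Delta\,dx_{1}\wedge\cdots\wedge dx_{n}$), these $\binom{n-1}{k}$ vanishings on the generic locus are expected to recover all coefficients of $\omega$, forcing $\omega=0$. Making this last duality step rigorous, with the correct bookkeeping of bidegrees, admissible tuples, and $S_{n}$-isotypic components, is the technical heart of the argument.
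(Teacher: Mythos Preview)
Your sharpness argument is fine and matches the paper's: the form $d_{1}d_{2}\cdots d_{k}\Delta$ (equivalently $\omega_{k,k-1,\dots,1}$) is a nonzero harmonic element of bidegree $\bigl(\binom{n}{2}-\binom{k+1}{2},k\bigr)$.

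The upper bound, however, has a genuine gap. Your entire argument rests on the spanning claim that $\mathcal{H}^{\ast,k}=\sum_{n-1\geq m_{1}>\cdots>m_{k}\geq 1}d_{m_{1}}\cdots d_{m_{k}}H$. This is precisely the Conjecture stated at the end of the paper, and it is left open there. Your adjoint computation $(d_{r})^{\ast}=\tfrac{1}{r+1}\iota(dp_{r+1})$ is correct, but the ``duality step'' you sketch does not close: from $\iota(dp_{j_{1}})\cdots\iota(dp_{j_{k}})\omega\in\mathcal{J}$ for all tuples, inverting the Jacobian on $\{\Delta\neq 0\}$ only gives $\Delta^{m}h_{I}\in\mathcal{J}$ for the coefficients $h_{I}$ of $\omega$, and harmonicity of $h_{I}$ does not by itself force $h_{I}=0$ from this. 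You have reduced the theorem to a statement that is, as far as the paper is concerned, at least as hard.

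The paper's route avoids the spanning claim entirely. It first observes $d\mathcal{I}\subset\mathcal{I}$, so that from $h_{r}(x_{r},\dots,x_{n})\in\mathcal{J}$ one obtains $\partial_{i}h_{r}(x_{r},\dots,x_{n})\,dx_{r}\wedge\cdots\wedge dx_{n}\in\mathcal{I}$ for $i=r,\dots,n$. A direct argument shows $\{\partial_{i}h_{r}\}_{i=r}^{n}$ has no common zero in $\mathbb{C}^{n-r+1}\setminus\{0\}$, hence is a regular sequence; consequently any $f\in\mathbb{C}[x_{r},\dots,x_{n}]$ of degree exceeding $(r-2)(n-r+1)$ satisfies $f\,dx_{r}\wedge\cdots\wedge dx_{n}\in\mathcal{I}$. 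Finally, given $\omega\in\Omega^{l,k}$ with $l>\binom{n}{2}-\binom{k+1}{2}$, one reduces each coefficient modulo $\mathcal{J}$ to staircase monomials $x^{J}$ with $0\leq j_{i}\leq i-1$, uses the $S_{n}$-invariance of $\mathcal{I}$ to move the wedge factor to $dx_{n-k+1}\wedge\cdots\wedge dx_{n}$, and checks that the residual degree in $x_{n-k+1},\dots,x_{n}$ then exceeds $k(n-k-1)=(r-2)(n-r+1)$. This is a self-contained commutative-algebra argument and does not presuppose the conjectural description of $\mathcal{H}$.
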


The proof of this result will take some preparation.

Let $h_{k}(x_{1},...,x_{n})$ be the degree $k$ complete homogeneous symmetric
polynomial in $n$ variables.

The proof of the following lemma is based on an argument of Neeraj Kumar that
he uploaded to Mathoverflow in 2012

\begin{lemma}
If $k\geq$ $2$ and $n\geq1$ then the polynomials $\partial_{i}h_{k},i=1,...,n$
have no common $0$ in $\mathbb{C}^{n}-\{0\}$.
\end{lemma}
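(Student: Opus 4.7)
The approach rests on two generating-function identities for the complete homogeneous symmetric polynomials. From $\sum_{k\ge 0} h_k t^k = \prod_{j=1}^n (1-x_j t)^{-1}$ one reads off the contiguity relation
\[
\partial_i h_k = h_{k-1} + x_i\,\partial_i h_{k-1},
\]
and Euler's identity gives $\sum_{i=1}^n x_i\,\partial_i h_k = k\,h_k$. I will argue by double induction on $k\ge 2$ and $n\ge 1$ using these two identities.

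Suppose $a = (a_1,\dots,a_n)\in\mathbb{C}^n\setminus\{0\}$ is a common zero of $\partial_1 h_k,\dots,\partial_n h_k$. Evaluating the contiguity relation at $a$ gives $a_i\,\partial_i h_{k-1}(a) = -h_{k-1}(a)$ for every $i$, so this product is independent of $i$. Summing over $i$ and applying Euler's identity to $h_{k-1}$ yields $(k-1)h_{k-1}(a) = -n\,h_{k-1}(a)$, hence $(n+k-1)\,h_{k-1}(a)=0$, forcing $h_{k-1}(a)=0$ in characteristic zero. Substituting back, $a_i\,\partial_i h_{k-1}(a) = 0$ for every $i$.

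The base cases are immediate. For $n=1$, $h_k(x_1) = x_1^k$ has $\partial_1 h_k = k\,x_1^{k-1}$, which vanishes only at the origin. For $k=2$, direct computation gives $\partial_i h_2 = x_i + p_1$, and common vanishing forces $a_1=\cdots=a_n=-p_1(a)$, from which $(n+1)p_1(a)=0$ yields $a=0$. For the inductive step with $k\ge 3$ and $n\ge 2$: if some coordinate vanishes, say $a_n=0$, then $(a_1,\dots,a_{n-1})\ne 0$ and the identity $h_k(x_1,\dots,x_{n-1},0) = h_k(x_1,\dots,x_{n-1})$ (in $n-1$ variables) produces a nonzero common zero of the $\partial_j h_k$ in $n-1$ variables, contradicting the inductive hypothesis on $n$. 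If instead every $a_i$ is nonzero, then the conclusion of the previous paragraph gives $\partial_i h_{k-1}(a)=0$ for every $i$, and the inductive hypothesis at level $k-1\ge 2$ (same $n$) yields $a=0$.

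The only substantive step is the Euler-plus-contiguity combination in the middle paragraph; I expect no real obstacle beyond the bookkeeping for the double induction, and in particular the identification of setting $x_n=0$ with the corresponding statement in $n-1$ variables is completely mechanical.
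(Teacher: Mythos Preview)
Your argument is correct and follows essentially the same architecture as the paper's: a double induction on $k$ and $n$, the contiguity relation $\partial_i h_k = h_{k-1} + x_i\,\partial_i h_{k-1}$, and the case split on whether some coordinate vanishes. The only cosmetic difference is in deriving $h_{k-1}(a)=0$: the paper sums the $\partial_i h_k$ directly via the identity $\sum_i \partial_i h_k = (n+k-1)h_{k-1}$, whereas you sum the contiguity relations and apply Euler to $h_{k-1}$; both routes arrive at $(n+k-1)\,h_{k-1}(a)=0$.
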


\begin{proof}
It is obvious for $k=1$ and all $n\geq1$ and for $n=1$ and all $k\geq1$.

We prove the result by induction on $k$ and for each $k$ by induction on $n$.
Assume for $k-1$ we prove the result for $k$ by induction on $n$. Assume the
result for $n-1\geq1$. If $x\neq0$ but some $x_{j}=0$ and if $x$ is common $0$
of $\partial_{i}h_{k},i=1,...,n$ then $(x_{1},...,x_{j-1},x_{j+1},...,x_{n})$
is a common $0$ of
\[
\partial_{i}h_{k}(x_{1},...,x_{j-1},x_{j+1},...,x_{n}),i\neq j
\]
so the inductive hypothesis implies that $(x_{1},...,x_{j-1},x_{j+1}%
,...,x_{n})=0$. Thus we need only prove that if $x_{i}\neq0$ for all $i$ then
at least one of the $\partial_{i}h_{k}(x_{1},...,x_{n})$ is non-zero. So
assume this is false and we derive a contradiction.

We will use standard multi-index notation. That is $x^{J}=x_{1}^{j_{1}}\cdots
x_{n}^{j_{n}}$ for $J=(j_{1},...,j_{n})$ and all of the $j_{i}$are
non-negative integers. Also $|J|==j_{1}+...+j_{n}$ and $h_{k}=\sum_{\left\vert
J\right\vert =k}x^{J}$. In general%
\[
\partial_{i}h_{k}=\sum_{\left\vert J\right\vert =k-1}(j_{i}+1)x^{J}%
\]
so%
\[
\sum\partial_{i}h_{k}=\left(  k+n-1\right)  h_{k-1}.
\]
This implies

$(\ast)$ If $\partial_{i}h_{k}(x)=0$ for all $i$ then $h_{k-1}(x)=0.$

Also for each $i$ one has
\[
h_{k}(x_{1},...,x_{n})=\sum_{j=1}^{k}x_{i}^{j}h_{k-j}(x_{1},...,x_{i-1}%
,x_{i+1},...,x_{n})+h_{k}(x_{1},...,x_{i-1},x_{i+1},...,x_{n})
\]%
\[
=x_{i}\sum_{j=1}^{k}x_{i}^{j-1}h_{k-j}(x_{1},...,x_{i-1},x_{i+1}%
,...,x_{n})+h_{k}(x_{1},...,x_{i-1},x_{i+1},...,x_{n})
\]%
\[
=x_{i}h_{k-1}(x_{1},...,x_{n})+h_{k}(x_{1},...,x_{i-1},x_{i+1},...,x_{n}).
\]
Thus%
\[
\partial_{i}h_{k}(x_{1},...,x_{n})=h_{k-1}+x_{i}\partial_{i}h_{k-1}%
(x_{1},...,x_{n}).
\]
If $\partial_{i}h_{k}(x_{1},...,x_{n})=0$ for all $i$ then $(\ast)$ \ above
implies that $h_{k-1}(x_{1},...,x_{n})=0$. Hence it implies that
$x_{i}\partial_{i}h_{k-1}(x_{1},...,x_{n})=0$ for all $i$ and so if all
$x_{i}\neq0$ (which is what we are assuming) then $\partial_{i}h_{k-1}%
(x_{1},...,x_{n})=0$ which is a contradiction to the inductive hypothesis in
$k$.
\end{proof}

The following corollary is just a restatement of the above lemma.

\begin{corollary}
If $k\geq2$ and $n\geq1$ then the functions $\{\partial_{i}h_{k}|i=1,...,n\}$
form a system of parameters for $\mathbb{C}[x_{1},...,x_{n}]$.
\end{corollary}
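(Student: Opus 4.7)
The plan is to deduce this directly from the preceding lemma via Hilbert's Nullstellensatz. By definition, a set of $n$ homogeneous elements in $\mathbb{C}[x_{1},\ldots,x_{n}]$ (which has Krull dimension $n$) is a (homogeneous) system of parameters precisely when the ideal they generate has radical equal to the irrelevant ideal $(x_{1},\ldots,x_{n})$, equivalently when their common vanishing locus in $\mathbb{C}^{n}$ is just $\{0\}$.

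First I would observe that each $\partial_{i}h_{k}$ is homogeneous of degree $k-1$, so the ideal $J=(\partial_{1}h_{k},\ldots,\partial_{n}h_{k})$ is a homogeneous ideal in the graded ring $\mathbb{C}[x_{1},\ldots,x_{n}]$. Next, the preceding lemma asserts that the affine variety $V(J)\subseteq\mathbb{C}^{n}$ is contained in $\{0\}$, and of course $0\in V(J)$ since each $\partial_{i}h_{k}$ is homogeneous of positive degree (using $k\geq 2$). Hence $V(J)=\{0\}$. Applying the Nullstellensatz, $\sqrt{J}=\mathbf{I}(\{0\})=(x_{1},\ldots,x_{n})$, so $J$ is $(x_{1},\ldots,x_{n})$-primary up to radical, which means $\{\partial_{1}h_{k},\ldots,\partial_{n}h_{k}\}$ is a homogeneous system of parameters. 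Equivalently, the quotient $\mathbb{C}[x_{1},\ldots,x_{n}]/J$ is a finite-dimensional $\mathbb{C}$-vector space, and the $n$ elements $\partial_{i}h_{k}$ are algebraically independent.

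There is no real obstacle here: the statement is essentially a restatement of the preceding lemma, since the lemma already isolates the only nontrivial content (the absence of nonzero common zeros). The corollary merely repackages that fact in the commutative algebra language needed for the subsequent application to Theorem~14.
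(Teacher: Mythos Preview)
Your argument is correct and matches the paper's treatment: the paper simply declares that the corollary ``is just a restatement of the above lemma,'' and your Nullstellensatz paragraph spells out precisely why that restatement is valid. There is nothing to add.
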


Let $I$ denote the ideal in $\mathbb{C}[x_{1},...,x_{n}]$ generated by
$\partial_{i}h_{k},i=1,...,n$. This above corollary implies that
$\{\partial_{i}h_{k}|i=1,...,n\}$ is a regular sequence in $\mathbb{C}%
[x_{1},...,x_{n}]$ since $\mathbb{C}[x_{1},...,x_{n}]$ is a graded
Cohen-Macaulay algebra we have

\begin{corollary}
\label{key}The Hilbert series of $\mathbb{C}[x_{1},...,x_{n}]/I$ is
$(1+q+...+q^{k-2})^{n}$. In particular if $f\in\mathbb{C}[x_{1},...,x_{n}]$
and $f$ is homogeneous of degree $d>n(k-2)$ then $f\in I$.
\end{corollary}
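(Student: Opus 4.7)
The plan is to exploit the standard Hilbert-series formula for a graded complete intersection. By the preceding corollary, the sequence $\partial_1 h_k,\ldots,\partial_n h_k$ is a system of parameters in $\mathbb{C}[x_1,\ldots,x_n]$, and since this polynomial ring is Cohen--Macaulay (being a regular local/graded ring), every homogeneous system of parameters is automatically a regular sequence. Thus the quotient $\mathbb{C}[x_1,\ldots,x_n]/I$ is a graded complete intersection.

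Next I would invoke the general Hilbert-series identity for a homogeneous regular sequence: if $f_1,\ldots,f_n$ is a regular sequence of homogeneous elements in $\mathbb{C}[x_1,\ldots,x_n]$ with $\deg f_i = d_i$, then the Hilbert series of the quotient is
\[
\prod_{i=1}^{n}\frac{1 - q^{d_i}}{1 - q}.
\]
This is proved by induction on $n$: multiplication by a nonzerodivisor $f_i$ of degree $d_i$ on a graded module $M$ yields a short exact sequence $0 \to M(-d_i) \to M \to M/f_iM \to 0$, so the Hilbert series of $M/f_iM$ is $(1-q^{d_i})$ times that of $M$.

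In our case every $\partial_i h_k$ is homogeneous of degree $k-1$, so the formula specializes to
\[
\prod_{i=1}^{n}\frac{1-q^{k-1}}{1-q} = \bigl(1 + q + q^2 + \cdots + q^{k-2}\bigr)^{n},
\]
which is the first assertion. The polynomial on the right has top degree $n(k-2)$, so its graded piece in any degree $d > n(k-2)$ is zero; equivalently, every homogeneous $f \in \mathbb{C}[x_1,\ldots,x_n]$ of degree $d > n(k-2)$ maps to $0$ in $\mathbb{C}[x_1,\ldots,x_n]/I$, i.e.\ lies in $I$. This yields the second assertion.

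The only nontrivial ingredients are the Cohen--Macaulay property of a polynomial ring (standard) and the regular-sequence Hilbert-series computation; there is no real obstacle, provided one is willing to cite the Cohen--Macaulay step. If one wished to avoid citing Cohen--Macaulay, the main work would instead be to verify directly that the Koszul complex on $\partial_1 h_k,\ldots,\partial_n h_k$ is exact, using that these form a system of parameters in a polynomial ring of Krull dimension $n$; this is the step I would expect to require the most care.
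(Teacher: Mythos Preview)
Your proposal is correct and follows essentially the same approach as the paper: both use the Cohen--Macaulay property of $\mathbb{C}[x_1,\ldots,x_n]$ to upgrade the system of parameters to a regular sequence, and then compute the Hilbert series of the quotient as $\left(\frac{1-q^{k-1}}{1-q}\right)^n$. The paper phrases the computation via freeness of the polynomial ring over the subalgebra $B$ generated by the $\partial_i h_k$, whereas you use the short exact sequence argument directly, but these are two standard packagings of the same fact.
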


\begin{proof}
Since $\{\partial_{i}h_{k}|i=1,...,n\}$ is a regular sequence in
$\mathbb{C}[x_{1},...,x_{n}]$, $\mathbb{C}[x_{1},...,x_{n}]$ is a finitely
generated free module on the algebra, $B$, generated by $1$ and $\{\partial
_{i}h_{k}|i=1,...,n\}$. Notice that $B$ is a polynomial algebra over
$\mathbb{C}$ generated by $n$ homogeneous elements each of degree $k-1$. Thus
the Hilbert series of $B$ is
\[
\frac{1}{(1-q^{k-1})^{n}}.
\]
This implies that the Hilbert series of $\mathbb{C}[x_{1},...,x_{n}]/I$ is%
\[
\left(  \frac{1-q^{k-1}}{1-q}\right)  ^{n}.
\]

\end{proof}

Recall that $\mathcal{J}_{n}$ is the ideal in $\mathbb{C}[x_{1},...,x_{n}]$
generated by the symmetric polynomials homogeneous of positive degree. It is
well known that
\[
h_{r}(x_{r},...,x_{n})\in\mathcal{J}_{n}.
\]
In fact, these elements form a Groebner basis for $\mathcal{J}_{n}$ for any
term order such that $x_{1}>...>x_{n}$.

\begin{lemma}%
\[
\partial_{i}h_{r}(x_{r},...,x_{n})dx_{r}\wedge\cdots\wedge dx_{n}%
\in\mathcal{I}_{n}%
\]
for $i=r,...,n$.
\end{lemma}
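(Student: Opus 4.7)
The plan is to exploit two facts already established in the paper: first, that $h_r(x_r,\ldots,x_n)\in\mathcal{J}_n$ (the ideal in $\mathbb{C}[x_1,\ldots,x_n]$ generated by symmetric polynomials of positive degree, since $h_r(x_r,\ldots,x_n)$ is among the quoted Groebner basis elements of $\mathcal{J}_n$), and second, that $d\mathcal{I}_n\subset\mathcal{I}_n$. Since the generators $p_1,\ldots,p_n$ of $\mathcal{J}_n$ are among the generators of $\mathcal{I}_n$, any element of $\mathcal{J}_n$ multiplied by any form lies in $\mathcal{I}_n$; combined with $d\mathcal{I}_n\subset\mathcal{I}_n$, this means that $d\bigl(f\,\omega\bigr)\in\mathcal{I}_n$ whenever $f\in\mathcal{J}_n$ and $\omega$ is any polynomial form.

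The trick is to apply this with a well-chosen $\omega$ that isolates a single partial derivative when we take $d$. Set $f=h_r(x_r,\ldots,x_n)$ and, for fixed $i\in\{r,\ldots,n\}$, take
\[
\omega_i = dx_r\wedge\cdots\wedge\widehat{dx_i}\wedge\cdots\wedge dx_n,
\]
the form obtained from $dx_r\wedge\cdots\wedge dx_n$ by removing $dx_i$. Then $f\,\omega_i\in\mathcal{I}_n$ by the preceding remark, hence
\[
d(f\,\omega_i) = df\wedge\omega_i = \sum_{j=1}^n \partial_j f\;dx_j\wedge\omega_i \;\in\;\mathcal{I}_n.
\]

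Now the computation collapses. Since $f$ depends only on $x_r,\ldots,x_n$, the partial $\partial_j f$ vanishes for $j<r$. For $j\in\{r,\ldots,n\}\setminus\{i\}$, the factor $dx_j$ already appears in $\omega_i$, so $dx_j\wedge\omega_i=0$. Only the $j=i$ term survives, and it gives
\[
dx_i\wedge\omega_i = (-1)^{i-r}\,dx_r\wedge\cdots\wedge dx_n.
\]
Therefore $d(f\,\omega_i)=(-1)^{i-r}\,\partial_i f\cdot dx_r\wedge\cdots\wedge dx_n$, which lies in $\mathcal{I}_n$, proving the lemma.

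There is no real obstacle: the argument is a two-line application of $d\mathcal{I}_n\subset\mathcal{I}_n$ to the element $h_r(x_r,\ldots,x_n)\cdot\omega_i$ of $\mathcal{I}_n$. The only thing to verify carefully is the sign bookkeeping in the wedge computation, which is immediate. The conceptual content is that multiplying a member of $\mathcal{J}_n$ by a form of codegree one and then applying $d$ extracts a single partial derivative, up to a prefactor of the top-degree component $dx_r\wedge\cdots\wedge dx_n$.
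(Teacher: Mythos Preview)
Your proof is correct and is essentially identical to the paper's own argument: define $\omega_i = h_r(x_r,\ldots,x_n)\,dx_r\wedge\cdots\wedge\widehat{dx_i}\wedge\cdots\wedge dx_n\in\mathcal{I}_n$, apply $d$, and use $d\mathcal{I}_n\subset\mathcal{I}_n$. Your sign $(-1)^{i-r}$ is in fact the correct one; the paper writes $(-1)^{i-1}$, which appears to be a minor slip, but this is immaterial to the conclusion.
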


\begin{proof}
By the above
\[
\omega_{i}=h_{r}(x_{r},...,x_{n})dx_{r}\wedge\cdots\wedge dx_{i-1}\wedge
dx_{i+1}\wedge\cdots\wedge dx_{n}\in\mathcal{I}_{n}.
\]
So we have%
\[
d\omega_{i}=(-1)^{i-1}\partial_{i}h_{r}(x_{r},...,x_{n})dx_{r}\wedge
\cdots\wedge dx_{n}\in\mathcal{I}_{n}.
\]

\end{proof}

\begin{corollary}
\label{key2}If $f\in\mathbb{C}[x_{r},...,x_{n}]$ is homogeneous of degree
$l>(r-2)(n-r+1)$ then%
\[
fdx_{r}\wedge\cdots\wedge dx_{n}\in\mathcal{I}_{n}.
\]

\end{corollary}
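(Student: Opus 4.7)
The plan is to reduce the corollary to Corollary~\ref{key} applied to the smaller polynomial ring $\mathbb{C}[x_r,\ldots,x_n]$ in the $n-r+1$ variables $x_r,\ldots,x_n$, with the parameter ``$k$'' of that corollary taken to be $r$. I will assume $r\geq 2$, since when $r=1$ the preceding lemma already gives $dx_1\wedge\cdots\wedge dx_n\in\mathcal{I}_n$ directly (all partials of $h_1$ are $1$) and the claim is immediate.

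First, the preceding lemma asserts that for every $i=r,\ldots,n$,
\[
\partial_i h_r(x_r,\ldots,x_n)\, dx_r\wedge\cdots\wedge dx_n \in \mathcal{I}_n.
\]
Since $\mathcal{I}_n$ is an ideal in $\Omega_n$ and elements of $\mathbb{C}[x_r,\ldots,x_n]$ commute with $dx_r\wedge\cdots\wedge dx_n$, it follows that $g\, dx_r\wedge\cdots\wedge dx_n\in\mathcal{I}_n$ for every $g$ in the ideal $I'$ of $\mathbb{C}[x_r,\ldots,x_n]$ generated by $\{\partial_i h_r(x_r,\ldots,x_n) : i=r,\ldots,n\}$.

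Next, I would invoke Corollary~\ref{key} in the smaller polynomial ring. The lemma underpinning Corollary~\ref{key} depends only on the fact that the partial derivatives are taken with respect to all variables appearing in $h_k$, so it applies verbatim after relabeling: with $r\geq 2$ the collection $\{\partial_i h_r(x_r,\ldots,x_n) : i=r,\ldots,n\}$ is a system of parameters, and hence a regular sequence, in the graded Cohen--Macaulay algebra $\mathbb{C}[x_r,\ldots,x_n]$. The Hilbert-series computation in Corollary~\ref{key} then yields
\[
\mathrm{Hilb}\bigl(\mathbb{C}[x_r,\ldots,x_n]/I'\bigr)(q) = (1+q+\cdots+q^{r-2})^{n-r+1},
\]
whose top degree is $(r-2)(n-r+1)$. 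Therefore any homogeneous $f\in\mathbb{C}[x_r,\ldots,x_n]$ of degree $l>(r-2)(n-r+1)$ lies in $I'$, and combined with the first step this gives $f\, dx_r\wedge\cdots\wedge dx_n\in\mathcal{I}_n$.

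No genuine obstacle arises; the entire content of the argument is the two-step combination of the previous lemma (which lifts inclusion in $I'$ to inclusion in $\mathcal{I}_n$ after multiplication by the top form in the relevant variables) with the Cohen--Macaulay Hilbert-series bound of Corollary~\ref{key}. The only thing worth noting is the re-indexing of the ambient variables, which is purely notational.
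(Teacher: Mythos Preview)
Your proposal is correct and follows essentially the same route as the paper: apply Corollary~\ref{key} in the smaller ring $\mathbb{C}[x_r,\ldots,x_n]$ with $k=r$ to write $f$ in the ideal generated by the $\partial_i h_r(x_r,\ldots,x_n)$, then invoke the preceding lemma to conclude $f\,dx_r\wedge\cdots\wedge dx_n\in\mathcal{I}_n$. Your explicit handling of the re-indexing and of the boundary case $r=1$ (where Corollary~\ref{key} is not directly available) is a welcome clarification that the paper leaves implicit.
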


\begin{proof}
Corollary \ref{key} implies that under the condition on $l,$ $f=\sum
u_{i}(x_{r},...,x_{n})\partial_{i}h_{r}(x_{r},...,x_{n})$. Thus%
\[
fdx_{r}\wedge\cdots\wedge dx_{n}=\sum u_{i}(x_{r},...,x_{n})\partial_{i}%
h_{r}(x_{r},...,x_{n})dx_{r}\wedge\cdots\wedge dx_{n}\in\mathcal{I}_{n}.
\]

\end{proof}

We are now ready to prove the Theorem. Assume that $\omega\in\Omega^{l,k}$ and
$l>\binom{n}{2}-\binom{k+1}{2}$ but $\omega\notin\mathcal{I}_{n}$. Then%
\[
\omega=\sum_{i_{1}<i_{2}<...<i_{k}}a_{i_{1}...i_{k}}(x)dx_{i_{1}}\wedge
\cdots\wedge dx_{i_{k}}%
\]
$a_{i_{1}...i_{k}}(x)$ homogeneous of degree $l$ and at least one of the terms
must satisfy
\[
a_{i_{1}...i_{k}}(x)dx_{i_{1}}\wedge\cdots\wedge dx_{i_{k}}\notin%
\mathcal{I}_{n}.
\]
We can apply an element $s\in S_{n}$ satisfying
\[
s\left(  dx_{i_{1}}\wedge\cdots\wedge dx_{i_{k}}\right)  =dx_{n-k+1}%
\wedge\cdots\wedge dx_{n}%
\]
to see that we have an element $a(x)dx_{n-k+1}\wedge\cdots\wedge dx_{n}%
\notin\mathcal{I}_{n}$ with $a(x)$ homogeneous of degree $l$. expanding
$a(x)$. Modulo the ideal $\mathcal{J}_{n}$, $a(x)$ can be expanded into
monomials with $x^{J}$ with $J=(j_{1},...,j_{n})$ and $0\leq j_{i}\leq i-1$.
Thus there exists such a monomial (of degree $l$) such that $x^{J}dx_{r}%
\wedge\cdots\wedge dx_{n}\notin\mathcal{I}_{n}$. We have
\[
\sum_{i<n-k+1}j_{i}\leq\sum_{i<n-k+1}(i-1)=\binom{n-k}{2}.
\]
Thus
\[
\sum_{i\geq n-k}j_{i}\geq l-\binom{n-k}{2}>\binom{n}{2}-\binom{n-k}{2}%
-\binom{k+1}{2}=k(n-k-1).
\]
This since Corollary \ref{key2} is stated for $r=n-k+1$(so $k=n-r+1$) this is
a contradiction.

\begin{proof}
This is best possible because if the operator $d_{j}$ are as in the previous
section then $d_{1}d_{2}\cdots d_{k}\Delta$ is harmonic non-zero and in
$\Omega^{l,k}$ with $l=\binom{n}{2}-\binom{k+1}{2}$.
\end{proof}

\begin{corollary}
If $\omega$ is a harmonic element of $\Omega_{n}$ then its total degree is at
most $\binom{n}{2}$. The space of harmonic elements of total degree $\binom
{n}{2}$ is two dimensional and has basis $\Delta,d\Delta$.
\end{corollary}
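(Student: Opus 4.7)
The plan is to combine the degree bound of the preceding Theorem with the fact that the exterior derivative $d=d_{1}$ preserves harmonic forms, then apply the algebraic Poincar\'e lemma to reduce to the scalar case.

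First I would establish the total-degree bound. A nonzero harmonic $\omega\in\Omega^{l,k}$ is orthogonal to $\mathcal{I}_{n}$ and so cannot itself lie in $\mathcal{I}_{n}$; the preceding Theorem therefore forces $l\le\binom{n}{2}-\binom{k+1}{2}$, and hence
\[
l+k\le\binom{n}{2}-\binom{k+1}{2}+k=\binom{n}{2}-\binom{k}{2}\le\binom{n}{2}.
\]
Equality in the total degree forces $\binom{k}{2}=0$, i.e. $k\in\{0,1\}$. For $k=0$ the harmonics of degree $\binom{n}{2}$ are the usual $S_{n}$-harmonic polynomials of top degree, and $H^{\binom{n}{2}}=\mathbb{R}\Delta$.

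For $k=1$ the main step is to show that $d$ maps harmonics to harmonics. Indeed $D_{l}(d\omega)=d(D_{l}\omega)=0$ since partial derivatives commute, and $\delta_{l}(d\omega)=D_{l+1}\omega-d(\delta_{l}\omega)=0$ by the anticommutation relation $d\delta_{l}+\delta_{l}d=D_{l+1}$ proved above. Applied to $\omega\in H_{\binom{n}{2}-1,1}$, this puts $d\omega$ into $H_{\binom{n}{2}-2,2}$, which vanishes by the Theorem because $\binom{n}{2}-2>\binom{n}{2}-\binom{3}{2}$. Hence $\omega$ is closed; the algebraic Poincar\'e lemma for polynomial forms gives $\omega=df$ for some polynomial $f$ of degree $\binom{n}{2}$.

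To finish I would identify $f$. Since $f$ is a $0$-form, $\delta_{l}f=0$, so the anticommutation gives $\delta_{l}(df)=D_{l+1}f$, while $D_{l}(df)=d(D_{l}f)$. Thus $df$ is harmonic precisely when $D_{l}f=0$ for $l=1,\ldots,n$, i.e. when $f$ is $S_{n}$-harmonic. This pins $f$ in $H^{\binom{n}{2}}=\mathbb{R}\Delta$ and $\omega\in\mathbb{R}\,d\Delta$, giving the full statement: the harmonics of total degree $\binom{n}{2}$ are exactly $\mathbb{R}\Delta\oplus\mathbb{R}\,d\Delta$, two-dimensional. I do not foresee significant obstacles; the only minor points are that $d\Delta\ne 0$ (immediate from $\partial_{1}\Delta\ne 0$) and that the two summands are distinct (they have different form-degrees).
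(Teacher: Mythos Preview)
Your argument is correct. The paper states this Corollary without proof, treating it as an immediate consequence of the preceding Theorem; your derivation of the total-degree bound $l+k\le\binom{n}{2}-\binom{k}{2}$ and the resulting restriction to $k\in\{0,1\}$ is precisely the intended arithmetic. For the identification of the top-degree space the paper supplies no details; your route---showing $d$ preserves harmonicity via the anticommutation relation, using the Theorem again to force $d\omega\in H_{\binom{n}{2}-2,2}=0$, invoking the algebraic Poincar\'e lemma to write $\omega=df$, and then reading off from $\delta_{l}(df)=D_{l+1}f$ that $f$ must be $S_{n}$-harmonic of degree $\binom{n}{2}$---is a clean and self-contained way to finish. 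The minor checks you flag ($d\Delta\ne 0$, distinct form-degrees) are indeed trivial.
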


\section{A conjecture}

The previous results suggest the following (which is an analog of the operator
conjecture for diagonal harmonics).

\textbf{\ \ \ Conjecture} \ \ Let $H$ be the space of all $S_{n}$--harmonic
polynomials in one set of variables. Then the space of all $S_{n}$--harmonic
differential forms is%
\[
\sum_{k=0}^{n-1}\sum_{1\leq j_{1}<...<j_{k}\leq n-1}d_{j_{1}}d_{j_{2}}\cdots
d_{j_{k}}H.
\]

This is true for the alternants and so it implies, in particular, that the
harmonic differential forms are spanned by the partial derivatives of the alternants.

A version of this conjecture has been proposed by Zabrocki. In a context
closer to ours a result of this nature appears in [RW].

\begin{center}
{\Large References}

\bigskip
\end{center}

[RW] Brendon Rhoades and Andrew Timothy Wilson, Vandermondes in Superspace, To
appear.\smallskip

[S] Louis Solomon, Invariants of Finite Reflection Groups, Nagoya Math. J. 22
(1963), 57-64.

[W] Nolan R. Wallach, Invariant differential operators on reductive Lie
algebras and Weyl group representations, Jour. A.M.S, 6 (1998),
779-816.\smallskip

[Z] Mike Zabrocki, A module for the $\Delta$--conjecture, arXiv:1902.08966v1.

\end{document}